\theoremstyle{plain}
\newtheorem{thm}{Theorem}[section]
\newtheorem{prop}[thm]{Proposition}
\newtheorem{lemma}[thm]{Lemma}
\newtheorem{cor}[thm]{Corollary}
\theoremstyle{definition}
\newtheorem{defn}[thm]{Definition}
\newtheorem{exa}[thm]{Example}
\theoremstyle{remark}
\newtheorem{rem}[thm]{Remark}
\numberwithin{equation}{section}
\title{A model structure and Hopf-cyclic theory on the category of coequivariant modules over a comodule algebra} 
\date{\today} 
\author{Mariko Ohara}
\address{Center for Liberal Arts and Sciences, Faculty of Engineering, Toyama Prefectural University, 5180, Kurokawa, Imizu City, Toyama Prefecture, JAPAN, 939-0398. 
\\ Tel : +81-766-56-7500.
}  
\email{primarydecomposition@gmail.com}
\subjclass[2020]{ 16T15, 18G20, 57T05 55N25 (primary),  16E30, 55U10 (secondary)}
\keywords{Hopf algebra, derived category, model category}
\newcommand{\Ker}{{\mathrm{Ker}}}
\newcommand{\Hom}{\mathrm{Hom}}
\newcommand{\lmod}{\underline{\mathrm{LMod}}}
\newcommand{\LMod}{\mathrm{LMod}}
\newcommand{\rmod}{\underline{\mathrm{RMod}}}
\newcommand{\bysame}{\leavemode\hbox to3em{\hrulefill}\,}
\begin{document}
\thispagestyle{empty}

\begin{abstract}
Let $H$ be a coFrobenius Hopf algebra over a field $k$. Let $A$ be a right $H$-comodule algebra over $k$. 

We recall that the category $\mathcal{M}^H$ of right $H$-comodules admits a certain model structure whose homotopy category is equivalent to the stable category of right $H$-comodules given in \cite{Fari}. In the first part of this paper, 
we show that the category $\LMod_A(\mathcal{M}^H)$ of left $A$-module objects in $\mathcal{M}^H$ admits a model structure, which becomes a model subcategory of the category of $A \# H^\ast$-modules endowed with a model structure given in \cite{Ohara} if $H$ is finite dimensional with a certain assumption. Note that $\LMod_A(\mathcal{M}^H)$ is not a Frobenius category in general. We also construct a functorial cofibrant replacement by proceeding the similar argument as in \cite{Qi}. 

Hopf-cyclic theory is refered as a theory of cyclic homology of (co)module (co)algebra over a Hopf algebra $H$ whose coefficients in Hopf $H$-modules. 
In the latter half of this paper, we see that cyclic $H$-comodules which give Hopf-cyclic (co)homology with coefficients in Hopf $H$-modules are contructible in the homotopy category of right $H$-comodules, 
and we investigate a Hopf-cyclic (co)homology in slightly modified setting by assuming $A$ a right $H$-comodule $k$-Hopf algebra with $H$-colinear bijective antipode in stable category of right $H$-comodules and give an analogue of the characteristic map.  
\end{abstract}

\maketitle

\section{Introduction}
Hopf algebras play the central role of representation theory and theory of decategorification. Recently, Khovanov~\cite{Khov}, Qi~\cite{Qi} and Farinati~\cite{Fari} defined the derived category of $H$-modules, $A \# H$-modules for an $H$-module algebra $A$ and $H$-comodules, respectively. They calculated the Grothendieck groups $K_0$ and $G_0$ of the derived categories of $A \# H$-modules and right $H$-comodules, respectively. 

In \cite{Ohara}, the author showed that the category of $A \# H$-modules admits a certain model structure under certain assumption, and the derived category of $A \# H$-modules in \cite{Qi} is arising from the model structure.


For $H$-comodules, Hovey gave a model structure on the category of chain complexes on $H$-comodules. 
Hess and Shipley generalized the stable model structure on the category of chain complexes of $H$-modules as a left induced model structure and related with the model structure on the category of simplicial sets and spectra, respectively.  
Hess, K\c{e}dziorek, Riehl and Shipley observed a necessary and sufficient condition for induced model structure which is combinatorial and studied the category of differential graded comodules.

The category of modules over a Frobenius algebra admits a model structure whose weak equivalences are the stable equivalences by Hovey~\cite{Hov}. Li generalized the model structure to an exact Frobenius category. The model structure is well-accepted among experts but the proof is written in his paper for the first time~\cite{Li}.

A Hopf algebra $H$, such that its linear dual $H^\ast$ has a left integral, is called coFrobenius as in Definition~\ref{cofrob}. 
Let $H$ be a coFrobenius Hopf algebra. Let $A$ be a right $H$-comodule algebra in the category $\mathcal{M}^H$ of right $H$-comodules. 
In this paper, we recall that the category of left $A$-module objects in $\mathcal{M}^H$ is endowed with the stable model structure. 
Here $\lambda : H \to k$ is a cointegral, $S$ the antipode and $\eta$ the unit map.  
In the case of $A \# H$-module as in \cite{Qi} and \cite{Ohara}, $\Hom_A(-, -)$ is naturally an $H$-module. However, in the case of $H$-coequivariant $A$-modules, $\Hom_A(-, -)$ is not an $H$-comodule in general. We also modified suspention and desuspention defined in \cite{Fari} to be compatible with $A$-module structure.  

We have the following. 
\begin{thm}[Proposition~\ref{main}]
Let $H$ be a coFrobenius Hopf algebra and $\mathcal{M}^H$ the category of right $H$-comodules together with $H$-colinear maps endowed with the stable model structure. Let $\LMod_A(\mathcal{M}^H)$ be the category of left $A$-modules of $\mathcal{M}^H$ for an $H$-comodule algebra $A$. Let us denote the forgetful functor by $U : \LMod_A(\mathcal{M}^H) \to \mathcal{M}^H$. 

Then, $\LMod_A(\mathcal{M}^H)$ admits a model structure with respect to the three classes of maps; a map $f$ in $\LMod_A(\mathcal{M}^H)$ is a weak equivalence if $U(f)$ is a weak equivalence, a fibration if $U(f)$ is a fibration and a cofibration if $f$ has the left lifting property with respect to trivial fibrations, respectively. 
\end{thm}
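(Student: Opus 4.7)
The plan is to obtain this model structure by right-transfer along the free-forgetful adjunction
\[
F : \mathcal{M}^H \rightleftarrows \LMod_A(\mathcal{M}^H) : U,
\]
where $F(M) = A \otimes M$ carries the evident left $A$-action and the diagonal right $H$-coaction. Since the stable model structure on $\mathcal{M}^H$ recalled from \cite{Fari} is cofibrantly generated, with sets of generating cofibrations and generating trivial cofibrations that I denote $I$ and $J$, one has candidate generating sets $FI$ and $FJ$ in $\LMod_A(\mathcal{M}^H)$. By adjunction, a map $f$ has the RLP against $FI$ (respectively $FJ$) if and only if $U(f)$ has the RLP against $I$ (respectively $J$) in $\mathcal{M}^H$, so the definition in the statement, with weak equivalences and fibrations created by $U$ and cofibrations defined by the LLP against trivial fibrations, is precisely Quillen's transfer setup.

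First I would verify the smallness hypotheses needed for the small object argument: $U$ creates filtered colimits because module objects over an algebra object in a cocomplete monoidal category inherit colimits from the ambient category, so the domains of the maps in $FI \cup FJ$, being free on small comodules, are themselves small in $\LMod_A(\mathcal{M}^H)$. The two-out-of-three and retract closure of weak equivalences and fibrations is then inherited from $\mathcal{M}^H$ via $U$, and one half of each factorization axiom follows immediately from the small object argument applied to $FI$ and $FJ$.

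The main obstacle is the acyclicity step of the transfer theorem: every transfinite composite of pushouts of maps in $FJ$ has to be a weak equivalence. Since $\LMod_A(\mathcal{M}^H)$ is not in general a Frobenius category, the arguments of Hovey and Li do not apply directly. Instead I would work at the level of $\mathcal{M}^H$: a pushout in $\LMod_A(\mathcal{M}^H)$ along $F(j) = \mathrm{id}_A \otimes j$ for $j \in J$ is created by $U$ as a pushout in $\mathcal{M}^H$ along $\mathrm{id}_A \otimes j$, so it suffices to show that the endofunctor $A \otimes (-) : \mathcal{M}^H \to \mathcal{M}^H$ sends stable equivalences to stable equivalences. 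Flatness of $A$ over the field $k$ gives exactness, so the cofibre is preserved, and the coFrobenius hypothesis on $H$ (together with the compatibility of suspension/desuspension with the $A$-tensor noted in the introduction) is precisely what lets one check that $A \otimes P$ remains injective-projective as a comodule whenever $P$ is. Closure of weak equivalences under pushouts along generating trivial cofibrations and under transfinite composition then yields the acyclicity condition.

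Finally, Quillen's transfer theorem assembles these ingredients into a cofibrantly generated model structure on $\LMod_A(\mathcal{M}^H)$ with generating sets $FI$ and $FJ$, whose cofibrations automatically admit the LLP characterization asserted in the theorem. Compatibility with the model structure on $A \# H^\ast$-modules of \cite{Ohara} in the finite-dimensional case, claimed in the abstract, would then be checked separately by comparing the two sets of generating (trivial) cofibrations under the standard equivalence between $\mathcal{M}^H$ and $H^\ast$-modules.
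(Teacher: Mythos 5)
Your route --- right transfer along the free--forgetful adjunction $A\otimes_k(-) \dashv U$ using Quillen's transfer theorem --- is genuinely different from the paper's. The paper never invokes cofibrant generation: it checks two-out-of-three and retract closure directly through $U$, handles lifting by observing that the cokernel of a trivial cofibration is an injective comodule and applying Proposition~\ref{equiv}, and obtains the factorizations from the explicit mapping cylinder $C_f$ and cocylinder $P_f$ together with the functorial bar-resolution cofibrant replacement of Proposition~\ref{cofrep} and the fact that every object is fibrant. Your approach, if it worked, would have the advantage of yielding a cofibrantly generated structure for free; but it rests on an input the paper does not supply and that you do not justify.

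The concrete gap is your opening assertion that the stable model structure on $\mathcal{M}^H$ ``recalled from \cite{Fari} is cofibrantly generated, with sets of generating cofibrations and generating trivial cofibrations $I$ and $J$.'' The model structure the paper actually uses comes from \cite{Li} (Proposition~\ref{mh}), and Li's theorem on an arbitrary Frobenius category produces a model structure that is not asserted, there or in \cite{Fari}, to be cofibrantly generated. Without generating sets $I$ and $J$ there are no sets $FI$, $FJ$, no small object argument, and no adjoint RLP characterization of fibrations, so the entire transfer scaffolding has nothing to stand on. This is fillable --- monomorphisms in a Grothendieck category are cofibrantly generated, and local finiteness of $\mathcal{M}^H$ lets one detect injectivity of a cokernel against the set of finite-dimensional comodules, so a set generating the trivial cofibrations can be extracted --- but that is a substantive argument you would need to write out, not a recollection. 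Secondarily, your acyclicity step is incomplete at transfinite composites: you need injective comodules to be closed under the filtered colimits arising as cokernels of such composites (true because $\mathcal{M}^H$ is locally noetherian when $H$ is coFrobenius, but it must be said). The part of the acyclicity argument you do give is sound: $A\otimes(-)$ is exact over the field $k$, and $A\otimes P$ is injective for $P$ injective since $P$ is a summand of some $V\otimes H$ and hence $A\otimes P$ is a summand of the injective comodule $A\otimes V\otimes H$.
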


We remark that the model structure in the main theorem and the right-induced model structure in the paper of Hess and Shipley~\cite{HeS1} are introduced in different situation, respectively, but the model structure on the category of left $A$-module objects in $\mathcal{M}^H$ in this paper has the similar structure of right induced model structure~\cite[Definition 4.1]{HeS1}.

If $H$ is finite dimensional, we have a comparison with the model structure in \cite{Ohara} via the functor $i$ given in \cite{Hov}. 

\begin{cor}
Let $H$ be a finite dimensional coFrobenius commutative Hopf algebra and $\mathcal{M}_{H^\ast}$ the category of left $H^\ast$-modules. 
Let us take a right $H$-comodule algebra $A$, which can be regarded as a left $A \# H^\ast$-module. Let $i : \mathcal{M}^H \to \mathcal{M}_H^{\ast}$ be the inclusion functor and $\tilde{i}$ the induced functor on the category of left $A$-modules. Then, we have the following commutative diagram of functors. 
\[ 
 \xymatrix@1{
\LMod_A(\mathcal{M}^H) \ar[d]_{U}  \ar[r]^{\tilde{i}} & \LMod_A(\mathcal{M}_{H^\ast}) \ar[d]^{U} \\
\mathcal{M}^H \ar[r]^{i} & \mathcal{M}_{H^\ast} 
} \] 
Here, we denote by $U$ the forgetful functors. 

Assume that $H$ is coFrobenius and that $\LMod_A(\mathcal{M}_{H^\ast})$ inherits a model structure which is defined as in \cite{Ohara}. Then, $\tilde{i}$ and $i$ are left and right Quillen functor and $U$ are right Quillen functors.  
\end{cor}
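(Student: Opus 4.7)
My plan is to verify each claim by unwinding the design of the three model structures involved: the structure on $\LMod_A(\mathcal{M}^H)$ is right-transferred along the forgetful $U$ by the main theorem, the structure on $\LMod_A(\mathcal{M}_{H^\ast})$ is the analogous transferred structure from \cite{Ohara}, and the finite-dimensional coFrobenius hypothesis puts $\mathcal{M}^H$ and $\mathcal{M}_{H^\ast}$ into an equivalence of abelian categories via the standard duality $H\leftrightarrow H^{\ast}$.

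First I would verify commutativity of the square. The functor $\tilde{i}$ reinterprets the $H$-coaction $\rho : M \to M \otimes H$ on a left $A$-module $M$ as the $H^\ast$-action $h^\ast \cdot m = \sum m_{(0)} \langle h^\ast, m_{(1)} \rangle$, retains the $A$-action, and leaves $M$ unchanged as a $k$-module; the horizontal $i$ performs the same reinterpretation at the level of underlying comodules, while the two vertical $U$'s simply discard the $A$-action on each side. Both composites produce the same $H^\ast$-module, and morphism-level agreement is automatic. Next, each $U$ is right Quillen: by the very definition of the model structures, a map in the top category is a fibration or weak equivalence exactly when its $U$-image is, so $U$ preserves these classes; moreover it admits a left adjoint given by the free $A$-module functor $A \otimes_k (-)$ with diagonal coaction, respectively action.

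For the horizontal functor $i$, since $H$ is finite-dimensional $i$ is an equivalence of abelian categories, and the stable model structure of \cite{Fari} on $\mathcal{M}^H$ corresponds under this equivalence to the Frobenius stable model structure on $\mathcal{M}_{H^\ast}$, because both are built from the same Frobenius data (injective envelopes match, and the suspension/desuspension functors are compatible with the duality). Hence $i$ preserves cofibrations, fibrations, and weak equivalences, making it both left and right Quillen. For $\tilde{i}$, preservation of fibrations and weak equivalences is then immediate from commutativity: if $f$ has such a property in $\LMod_A(\mathcal{M}^H)$ then $U(\tilde{i}(f)) = i(U(f))$ has the corresponding property in $\mathcal{M}_{H^\ast}$, so $\tilde{i}(f)$ does in $\LMod_A(\mathcal{M}_{H^\ast})$. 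Because $\tilde{i}$ is itself an equivalence of categories induced from the monoidal equivalence $i$, one obtains preservation of cofibrations by the analogous argument applied to an inverse equivalence: trivial fibrations transport back through $\tilde{i}^{-1}$ to trivial fibrations, so any lifting problem against a trivial fibration in the target transports to one in the source, resolved by cofibrancy of $f$.

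The main obstacle, I expect, is making the last step watertight, since cofibrations in either category are specified only by the LLP against trivial fibrations. The comparison argument requires checking that the model structure on $\LMod_A(\mathcal{M}_{H^\ast})$ from \cite{Ohara} is genuinely the structure right-lifted along $U$, and not merely one that shares fibrations and weak equivalences with it; this most likely reduces to a matching of generating (trivial) cofibrations on the two sides, or, more abstractly, to invoking the fact that both model structures are right-transferred from the same Frobenius stable structure on $\mathcal{M}_{H^\ast}$, so that the LLP characterization of cofibrations yields the same class in both presentations.
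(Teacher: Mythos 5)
Your proposal is correct and follows essentially the same route the paper takes (implicitly, in Section 3.2 and in the unproved corollary following the main proposition): the vertical $U$'s are right Quillen because fibrations and weak equivalences are right-transferred along them with left adjoint $A\otimes_k(-)$, and for finite-dimensional $H$ the rational-module embedding $i$ becomes an equivalence of Frobenius categories matching the two stable model structures, so $i$ and hence $\tilde{i}$ preserve all three classes. The paper gives essentially no written proof of this corollary, so your explicit handling of the cofibration transfer for $\tilde{i}$ --- and your closing caveat that the model structure of \cite{Ohara} on $\LMod_A(\mathcal{M}_{H^\ast})$ must genuinely be the one right-lifted along its forgetful functor (which is exactly what the corollary's hypothesis is meant to guarantee) --- supplies detail the paper leaves implicit rather than diverging from it.
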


Let $k$ be a field and $A$ a $k$-algebra. Let $C_{\ast}(A)$ the Hochschild chain complex, with $C(A)_n=A^{\otimes n+1}$. 
This induces a bicomplex $B(A)_{\ast \ast}$ with $B(A)_{pq}=A^{\otimes (q-p+1)}$ if $q \ge p$ and $0$ otherwise, whose the vertical differential $b$ is the Hochschild differential and the horizontal differential is given by the Connes' operator $B_n : A^{\otimes n+1} \to A^{\otimes n+2}$. The cyclic homology of $A$ is defined by $HC_{\ast}(A)=H_{\ast}(Tot B_{\ast \ast}(A))$. This definition makes sense for a mixed complex $(M , b, B)$, where $M$ is a graded $k$-module with differentials $b$ and $B$ such that $bB+Bb=0$. 
An alternative definition of cyclic homology is $HC_{\ast}(M)=H_{\ast}(k \otimes_{\Lambda}^{\mathbb{L}}M)$ for a mixed complex $(M, b, B)$, where $(-) \otimes_{\Lambda}^{\mathbb{L}}(-)$ stands for the derived tensor product.

Let us define the notation $\Lambda (x, y)$ to be the signed Hopf algebra $k\{x, y \} / (x^2 , y^2, xy+yx)$. 
In \cite{Fari}, the Hopf algebra $k[\mathbb{Z}] \# \Lambda (x, y)$ is defined to be $k\{ g^{\pm 1}, x, y \} / (gx=-xg, gy=-yg, 0=x^2=y^2=xy+yx)$ together with the comultiplication map given by $\Delta (g)=g \otimes g$, $\Delta (x)=x \otimes g + 1 \otimes x$ and $\Delta (y)=y \otimes g^{-1}+1 \otimes y$. In this case, $x$ and $y$ are corresponding to the differential of degree $+1$ and $-1$, respectively. 
He also calculated the Hopfological cohomology of a mixed complexes, which is in turn slightly different from the cyclic homology~\cite[Section 5.3]{Fari}.

Hopf-cyclic homology is introduced by Connes and Moscovich~\cite{Hopf1}. It has been studied by Kaygun and Khalkhali~\cite{KK}, Rangipour and many other researchers and resently generalized to certain Hopf algebroids by Balodi, which can be refered as a theory of cyclic homology of (co)module (co)algebra over a Hopf algebra $H$ with coefficients in stable Hopf $H$-modules. 
For example, Hopf-cyclic theory appears in Hopf-Galois extension. For a right $H$-comodule algebra $A$ and the coinvariant subalgebra $B=A^{coH}$, the extension $B \subset A$ is called Galois if the natural map $A \otimes_B A \to A \otimes H$ is bijective. 
By the result of Jara and \c{S}tefan, the cyclic structure on the relative cyclic complex of $B \subset A$ gives rise to a cyclic module via the natural map, depending only on $H$ and stable anti-Yetter Drinfeld module $A/ [A, B]$. 

We recall the properties when a relative $(A, H)$-Hopf module becomes a projective $A$-module and the Hopf-cyclic modules with a stable $H$-module / comodule is equivalent to zero in the homotopy category of right $H$-comodules. Here, a stable $H$-module / comodule is an $H$-comodule wich is also $H$-module together with $m \circ \Delta = id_H$. 
So we would like to consider a slightly modified cyclic homology assuming $A$ a Hopf algebra $A$, induce an analogue of the characteristic map and compare it with one in the category of chain complexes of $H$-comodules.   
\begin{thm}[Corollary~\ref{main2}]
The Hopf-cyclic homology of $H$-comodule $k$-Hopf algebra $A$ together with bijective $H$-colinear antipode $S: A \to A$ whose coefficients in stable $A$-module / comodule in the category of $H$-comodules satisfies the universal approximation as in \cite{Kay}. We also obtain analogue of Hopf-cyclic and equivariant Hopf-cyclic theory. 
There is a map $Q^A_\bullet(A, k) \to Q^A_\bullet(A, A)$ of right $H$-comodules in $\mathcal{M}^H$ induced from the unit map. 
\end{thm}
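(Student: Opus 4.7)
The plan is to adapt Kaygun's universal Hopf-cyclic construction~\cite{Kay} to the symmetric monoidal category $\mathcal{M}^H$ of right $H$-comodules. For a right $H$-comodule $k$-Hopf algebra $A$ with bijective $H$-colinear antipode $S$, and a stable $A$-module / comodule $M$ in $\mathcal{M}^H$, I would define $Q^A_n(A,M) := M \otimes A^{\otimes n}$ equipped with the diagonal $H$-coaction inherited from the tensor structure on $\mathcal{M}^H$, and define the faces, degeneracies and cyclic operator by exactly the same formulas as in Kaygun's original construction, assembled out of the multiplication of $A$, the $A$-action and $A$-coaction on $M$, and the antipode $S$.

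The first step is to check that each such structure map is a morphism in $\mathcal{M}^H$. This reduces to the $H$-colinearity of three pieces of data: the multiplication of $A$, which holds because $A$ is an $H$-comodule algebra; the $A$-action and $A$-coaction on $M$, which is precisely the hypothesis that $M$ is a stable module / comodule object inside $\mathcal{M}^H$; and the antipode $S$, which is the new hypothesis. Once this is in place, the simplicial identities and the cyclic relation $\tau_n^{n+1} = \mathrm{id}$ are diagrammatic identities in a symmetric monoidal category, and therefore transport from the $k$-linear setting to $\mathcal{M}^H$ verbatim, provided one uses bijectivity of $S$ to define $\tau_n^{-1}$ and the stability hypothesis on $M$ to close up the cycle.

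The universal approximation property then becomes the statement that $Q^A_\bullet(A,M)$ is the target of a universal cyclic $H$-comodule morphism from the cyclic bar object on $A$ with coefficients in $M$; the argument of \cite{Kay} is a universal-property calculation that transports to $\mathcal{M}^H$ once the simplicial/cyclic identities above are enriched. The equivariant analogue is obtained by applying the same construction to $A \otimes A$ or to the relative complex associated with the Hopf-Galois setting $B \subset A$ recalled in the introduction. Finally, the map $Q^A_\bullet(A,k) \to Q^A_\bullet(A,A)$ is produced by functoriality in the coefficient from the unit $\eta : k \to A$, which is a morphism of stable $A$-modules / comodules in $\mathcal{M}^H$ when $k$ carries the trivial $H$-coaction and trivial $A$-structures while $A$ carries its regular structures.

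The main obstacle will be verifying $\tau_n^{n+1} = \mathrm{id}$ as a morphism of $H$-comodules: this is precisely where the bijectivity of the $H$-colinear antipode enters essentially, and where one must check that the Sweedler-notation manipulations used in \cite{Kay} remain covariant under the ambient $H$-coaction on each tensor factor. Once that identity is secured, the remainder of the proof is bookkeeping along the pattern of \cite{Kay} enriched in $\mathcal{M}^H$, together with the observation that the model structure of Proposition~\ref{main} lets one interpret the resulting cyclic object in the homotopy category of right $H$-comodules.
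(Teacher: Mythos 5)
There is a genuine gap at the step where you assert that ``the simplicial identities and the cyclic relation $\tau_n^{n+1}=\mathrm{id}$ are diagrammatic identities in a symmetric monoidal category, and therefore transport from the $k$-linear setting to $\mathcal{M}^H$ verbatim.'' For coefficients that are merely a stable $A$-module/comodule, the para-cyclic compatibility between the last face and the cyclic operator fails: comparing $d_n\circ t_n$ and $t_{n-1}\circ d_{n-1}$ on $[a_0|\cdots|a_n]m$ produces the terms $m^{(-1)}a_{n-1}m^{(-2)}a_n$ and $m^{(-1)}m^{(-2)}a_{n-1}a_n$ in the first slot, and these need not agree, because the iterated $A$-coaction legs of $m$ are distributed over $a_{n-1}$ and $a_n$ separately in one case and applied jointly to the product in the other. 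This is exactly the computation in the proof of Proposition~\ref{cyclic}, and it is why the bar-type object $T_\bullet(A,M)$ with $T_n(A,M)=A^{\otimes(n+1)}\otimes M$ is only \emph{pseudo-para-cyclic}. The hypotheses you invoke do repair part of the structure --- the bijective $H$-colinear antipode makes $t_n$ invertible, and stability of $M$ gives $t_n^{n+1}=\mathrm{id}$ (note that in the paper it is stability, not bijectivity of $S$, that yields this last identity) --- but they do not restore the $d_n$--$t_n$ relation, so the object you construct is not a cyclic object in $\mathcal{M}^H$.

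Consequently your definition $Q^A_n(A,M):=M\otimes A^{\otimes n}$ with the bar-type structure maps cannot serve as the cyclic object of the statement, and the proposal is internally inconsistent: you define $Q^A_\bullet(A,M)$ to \emph{be} the bar object and simultaneously describe it as the target of a universal cyclic approximation of that same bar object. The paper's (and Kaygun's) point is that these are different objects: $Q^A_\bullet(A,M)$ must be defined as the co-approximation of $T_\bullet(A,M)$, namely the largest cyclic sub-$H$-comodule, realized as a right Kan extension along $\Lambda_+\to\Lambda$; the universal property of that Kan extension is what produces the comparison map $T_\bullet(A,M)\to Q^A_\bullet(A,M)$ and the functoriality in the coefficient that yields $Q^A_\bullet(A,k)\to Q^A_\bullet(A,A)$ from the unit. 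Your final step --- functoriality applied to $\eta:k\to A$ with $k$ carrying the trivial $A$- and $H$-structures --- is the right idea and matches the paper, but it only goes through once $Q^A_\bullet$ is defined via the Kan extension rather than as the bar object itself.
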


We remark that, as an expansion of an idea of taking $k$ as in $Q^A_\bullet(A, k)$, if we take an $A$-coinvariant part of $M$ assuming that $M$ is a Hopf $A$-module in $\mathcal{M}^H$, we have the degree shift of $Q_\bullet^A(A, M)$ as in Remark~\ref{shift}.

\subsubsection*{Acknowledgement}
The author would like to thank Professor Dai Tamaki to suggest studying Hopf algebras and a lot of comments over the topic of Hopf-cyclic theory during the author's stay in Shinshu University. The author also would like to thank Professor Takeshi Torii to advise comparing right $H$-comodules with the category of certain dg-modues endowed with the several model structure.

\section{The category of $H$-comodules and left module objects over a monoid $A$}
Let $k$ be a field. 
Let $H$ be a Hopf algebra over $k$. 
We write $\otimes$ for $\otimes_k$. Throughout this paper, $H$ will be  assumed to be coFrobenius. The condition is given in Definition~\ref{cofrob}. 

\subsection{The category of right $H$-comodules}

A right $H$-comodule is a $k$-vector space $M$ together with a linear map $\rho : M \to M \otimes H$ satisfying two conditions $(id \otimes \Delta) \circ \rho = (\rho \otimes id) \circ \rho$ and $(id \otimes \varepsilon) \circ \rho = id$. Here $\Delta$ and $\varepsilon$ are the comultiplication map and the counit map of $H$, respectively. 

The following lemma is a basic fact (cf. \cite[Lemma 3.14]{Fari}). 
\begin{lemma}\label{314}
For any right $H$-comodule $M$, let $V$ be the underlying $k$-vector space of $M$. Then, we have an isomorphism $M \otimes H \cong V \otimes H$ of right $H$-comodules given by $m \otimes h \mapsto m_{(0)} \otimes m_{(1)}h$. The inverse map is given by $m \otimes h \mapsto m_{(0)} \otimes S(m_{(1)})h$, which are coming to a right $H$-comodule map. Here, we regard $M \otimes H$ and $V \otimes H$ as right $H$-comodules via the diagonal map and the map $id \otimes \Delta$, respectively. 
\end{lemma}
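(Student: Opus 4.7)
The plan is to verify directly that the two given maps are mutually inverse and are right $H$-colinear, using Sweedler notation together with the antipode identity $m_H(S \otimes id)\Delta = \eta\varepsilon = m_H(id \otimes S)\Delta$, the coassociativity $(\rho \otimes id)\rho = (id \otimes \Delta)\rho$ of the coaction, and the counit axiom.

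First I would set $\phi(m \otimes h) = m_{(0)} \otimes m_{(1)} h$ and $\psi(m \otimes h) = m_{(0)} \otimes S(m_{(1)}) h$, both $k$-linear maps. Composing them in either order, coassociativity of the coaction rewrites the iterated action of $\rho$ as $m_{(0)} \otimes m_{(1)} \otimes m_{(2)}$, after which the antipode axiom collapses the middle two tensor factors via $m_{(1)} S(m_{(2)}) = \varepsilon(m_{(1)}) 1 = S(m_{(1)}) m_{(2)}$, and the counit axiom $m_{(0)} \varepsilon(m_{(1)}) = m$ yields the identity. This shows $\phi$ and $\psi$ are mutually inverse.

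Next I would check $H$-colinearity of $\phi$. Applying the diagonal coaction to $m \otimes h$ and then $\phi \otimes id$ produces, after using coassociativity of $\rho$, the element $m_{(0)} \otimes m_{(1)} h_{(1)} \otimes m_{(2)} h_{(2)}$. On the other side, applying the coaction $id \otimes \Delta$ to $\phi(m \otimes h) = m_{(0)} \otimes m_{(1)} h$ and using that $\Delta$ is an algebra map gives $m_{(0)} \otimes m_{(1)(1)} h_{(1)} \otimes m_{(1)(2)} h_{(2)}$, which is the same expression by coassociativity of $\Delta$. Since $\phi$ is an $H$-colinear bijection, $\psi$ is automatically $H$-colinear as well.

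The only real obstacle is purely notational: one must be careful with Sweedler conventions to distinguish the copy of $H$ coming from the coaction on $M$ from the copy produced by $\Delta h$, and one must fix at the outset which factor of $M \otimes H$ is considered the comodule tensor factor for the diagonal coaction, so that $\phi$ is declared to land in the correct side of $V \otimes H$. No deeper structural input is required beyond the antipode axiom.
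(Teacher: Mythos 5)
Your proof is correct and is essentially the standard argument: the paper itself does not reprove this lemma but cites \cite{Fari}, where the verification is exactly this direct Sweedler computation using coassociativity of $\rho$, both sides of the antipode axiom $m_{(1)}S(m_{(2)})=\varepsilon(m_{(1)})1=S(m_{(1)})m_{(2)}$ (one for each composite), the counit axiom, and the check that $\phi$ intertwines the diagonal coaction with $id\otimes\Delta$. Your attention to the direction of the two coactions and to which antipode identity each composite requires is exactly the right bookkeeping; nothing further is needed.
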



If $M$ is a right $H$-comodule, then $M$ can be regarded as a left $H^\ast$-module. Here $H^\ast$ stands for the linear dual $\Hom_k(H, k)$ of $H$. A left $H^\ast$-module is not necessarily a right $H$-comodule in general.

The right $H$-comodule structure of $M \otimes_k N$ consisting of two right $H$-comodules $M$ and $N$ is given by diagonal; $(id \otimes m) \circ (id \otimes \tau \otimes id) \circ (\rho_M \otimes \rho_N) :  M \otimes_k N \to M \otimes_k H \otimes_k N \otimes_k H \to M \otimes_k N \otimes H \otimes_k H \to M \otimes_k N \otimes_k H$. Here, $\rho_M$ and $\rho_N$ is the right $H$-comodule structure of $M$ and $N$, respectively, and $\tau$ is the braiding. 
As a special case, we regard a right $H$-comodule of the form $M \otimes H$ as a right $H$-comodule via the structure map $id \otimes \Delta$. By Lemma~\ref{314}, it is isomorphc as right $H$-comodules to $M \otimes H$ via the diagonal coaction.  

We say a map between right $H$-comodules an $H$-colinear map if it is compatible with right $H$-comodule structures. 
We denote by $\Hom^H(M, N)$ the morphism space of maps of $H$-colinear right $H$-comodules from $M$ to $N$.   

Let us denote the category of right $H$-comodules by $\mathcal{M}^H$. 

\begin{defn}[\cite{AC2013}, \cite{Fari}]\label{cofrob}
Let $k$ be a field and $H$ a Hopf algebra over $k$. A left integral $\Lambda : H \to k$ is a map satisfying $(id \otimes \Lambda) \Delta h=\Lambda (h) 1$ for any $h \in H$. In other words, we have $h_{(1)}\Lambda (h_{(2)})=\Lambda (h) 1$. For a Hopf algebra $H$, if its linear dual $H^{\ast}=\Hom_k(H, k)$ admits a left integral $\Lambda$, we refer to $H$ as a coFrobenius Hopf algebra. 
\end{defn}

\begin{rem}
A Hopf algebra $H$ being right coFrobenius amounts to being left coFrobenius, so that  it is left-right symmetric for Hopf algebras. Note that it is not the case for coalgebras. 
\end{rem}


Throughout this paper, we assume that $H$ is coFrobenius.

Let $H$ be a coFrobenius Hopf algebra. 
If $M$ is a right $H$-comodule, we can think of $M$ as a left $H^{\ast}$-module via the structure map $M \otimes H^{\ast} \to M \otimes H \otimes H^\ast \to M$, where the last map is the evaluation map $H \otimes H^\ast \to k$. This defines fully faithful functor from the category of right $H$-comodules to the category of left $H^{\ast}$-modules~\cite{Sweedler}. 
We say that a $H^{\ast}$-module $M$ is rational (or tame) if, for all $m \in M$, the submodule generated by $m$ is finite-dimensional. 
The essential image of the fully faithful functor consists of rational $H^\ast$-modules. 


If $H$ is coFrobenius, then every finite dimensional $H$-comodule is a quotient of a finite dimensional projective $H$-comodule and has an embedding to a finite dimensional injective $H$-comodule. Especially, the category of right $H$-comodules has enough projectives and enough injectives. Also the category $\mathcal{M}^H$ of right $H$-comodules becomes a Frobenius category~\cite[Theorem 2.8]{AC2013}, \cite[Theorem 2.2]{Fari}. 

We know that any injective right $H$-comodule is the direct summand of the form $M \otimes H$~\cite[Lemma 3.13 (2)]{Fari}. 

Let $H$ be a coFrobenius Hopf algebra over a field $k$. The category of right $H$-comodules has enough projectives if and only if $H$ is semiperfect, i.e., every simple $H$-comodule has an injective hull which is finite-dimensional as an $H$-vector space. This result is attributed to \cite{Lin}. 

The category of right $H$-comodules has all small limits and colimits and is locally presentable~\cite{Porst}. 

\begin{defn}[Stable equivalence]
Let $X$ and $Y$ be two $H$-comodules in $\mathcal{M}^H$. A map $f, g : X \to Y$ in $\mathcal{M}^H$ is a stably equivalent if $f-g$ factors through an injective $H$-comodule. This is an equivalence relation that is compatible with composition. A map $f, g : X \to Y$ of $H$-comodules is a stable equivalence if it is an isomorphism after taking stable equivalence classes of right $H$-comodule maps.  
\end{defn}


\begin{defn}
Let $M$ and $N$ be right $H$-comodules. Let $\Hom^H(M, N)_0$ be a $k$-submodule of $\Hom^H(M, N)$ consisting of those maps which factors through an injective right $H$-comodule. 
A set of maps $\underline{\Hom^H(M, N)}$ is defined to be $\Hom^H(M, N) / \Hom^H(M, N)_0$.  
 \end{defn}

Let $C$ be an additive full subcategory of an abelian category which is extension closed and $E$ is a class of short exact sequences in $C$. A map $A \to B$ of a short exact sequence $A \to B \to C$ in $E$ is called a cofibration and $B \to C$ called a fibration. Since the category of $H$-comodules becomes a Frobenius category, e.g., the class of enough injectives coincides with the class of enough projectives. Therefore the category of $H$-comodules becomes a Frobenius category. From \cite[Theorem 1.1]{Li}, an abelian Frobenius category admits the model structure given by the following three classes of maps; a cofibration is a monomorphism, a fibration is a epimorphism and a weak equivalence is a stable equivalence.

\begin{prop}[\cite{Li}, Theorem 1.1]\label{mh}
For a coFrobenius Hopf algebra $H$ over a field $k$, 
the category of right $H$-comodules becomes an abelian Frobenius category, so that it inherits the stable model structure, e.g., the class of cofibrations consisting of monomorphisms, fibrations consisting of epimorphisms and weak equivalences consisting of stable equivalences. 
\end{prop}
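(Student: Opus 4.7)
The plan is to reduce the proposition to a direct application of Li's theorem \cite[Theorem 1.1]{Li}, which equips any abelian Frobenius category with a model structure whose cofibrations are the monomorphisms, fibrations are the epimorphisms, and weak equivalences are the stable equivalences. Thus the task splits into (i) verifying that $\mathcal{M}^H$ is abelian, and (ii) verifying that it is a Frobenius exact category, i.e., has enough projectives and enough injectives, and that the two classes coincide.

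For (i), I would simply invoke that for any coalgebra the comodule category is Grothendieck abelian, with kernels, cokernels, direct sums, and filtered colimits computed as in $\Mod_k$; local presentability and cocompleteness are recorded in \cite{Porst} as cited above. For (ii), the existence of enough injectives follows from the canonical embedding $\rho : M \hookrightarrow M \otimes H$ combined with Lemma~\ref{314}, which identifies $M \otimes H$ (under the diagonal coaction) with the injective comodule $V \otimes H$ (under $\mathrm{id} \otimes \Delta$). Enough projectives is more delicate: a coFrobenius Hopf algebra is semiperfect, so the result of Lin cited in the excerpt supplies the required projective covers on finite dimensional comodules, which one then extends by direct sums. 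The coincidence of projective and injective objects is the genuinely Frobenius statement; here I would simply cite \cite[Theorem 2.8]{AC2013} and \cite[Theorem 2.2]{Fari}. At the root of this coincidence lies the nonzero left cointegral $\lambda : H \to k$ of Definition~\ref{cofrob}, together with the antipode $S$, which together produce the self-duality responsible for the Frobenius structure.

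Having assembled these ingredients, I would invoke Li's theorem verbatim. The canonical exact structure on the abelian category $\mathcal{M}^H$ (all short exact sequences) coincides with the one in which cofibrations are monomorphisms and fibrations are epimorphisms, so Li's construction directly yields the stable model structure with the three classes as stated; the weak equivalences are the stable equivalences in the sense defined in the excerpt, since these are precisely the isomorphisms in the stable category associated to the Frobenius structure.

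The main obstacle I anticipate is step (ii), specifically the identification of the projective $H$-comodules with the injective $H$-comodules. All other verifications are either formal or immediate consequences of results already cited. This Frobenius identification is the point at which the coFrobenius hypothesis is genuinely essential, and isolating it cleanly from the abelian and cocompleteness properties is what makes the appeal to \cite{Li} clean rather than a reproof of Li's argument inside $\mathcal{M}^H$.
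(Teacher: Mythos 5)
Your proposal is correct and follows essentially the same route as the paper: the paper likewise gives no independent argument, but assembles the Frobenius property of $\mathcal{M}^H$ from the cited results (\cite[Theorem 2.8]{AC2013}, \cite[Theorem 2.2]{Fari}, with enough injectives coming from $\rho : M \to M\otimes H$ and Lemma~\ref{314}, and enough projectives from semiperfectness via \cite{Lin}) and then invokes \cite[Theorem 1.1]{Li} verbatim to obtain the stable model structure. Your additional care in isolating where the coFrobenius hypothesis enters is consistent with, and slightly more explicit than, the paper's presentation.
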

\qed

We denote by $\underline{\mathcal{M}^H}$ the homotopy category with respect to the stable model structure. 
Since all objects are cofibrant, the category $\underline{\mathcal{M}^H}$ is categorically equivalent to the homotopy category defined in \cite{Fari}. Moreover, we can construct the derived category of $\mathcal{M}^H$, by inverting weak equivalences, which is arising from the model structure and is categorically equivalent to the derived category given in \cite{Fari}.

Next, we recall suspention and desuspention of a right $H$-comodule as in \cite{Fari}.  
Let $M$ be a right $H$-comodule with the structure map $\rho$. Let $M \otimes_k H$ be a right $H$-comodule with the structure map $id \otimes_k \Delta$. By definition of a right $H$-comodule, we have $(id \otimes \Delta)\rho = (\rho \otimes id)\rho$ for a structure map $\rho : M \to M \otimes_k H$ of $M$. Then, the suspension of $M$ is defined by $T(M)=(M \otimes H) / \rho (M)$, where the right $H$-comodule structure of $M \otimes H$ is given by $id \otimes \Delta$. 
On the other hand, we have $(\Lambda' \otimes id)\Delta (h)= \Lambda(S(h_{(1)}))h_{(2)}=\Lambda (S(h_{(1)}))S^{-1}S(h_{(2)})=S^{-1}(\Lambda(S(h_{(1)})S(h_{(2)})))=S^{-1}(\Lambda(Sh)1)=\Lambda'(h)1$, so that $\Lambda'$ is compatible with the structure map $\Delta$ as a right $H$-comodule of $H$ and the trivial $H$-coaction on $k$. 

For a left integral $\Lambda : H \to k$, $\Lambda' = \Lambda \circ S$ is right integral, so that $\Ker (\Lambda')$ becomes a right $H$-comodule. The desuspension of $M$ is defined by $T^{-1}(M)=M \otimes \Ker (\Lambda')$. 

By Lemma \ref{314}, we have an $H$-colinear isomorphism on $M \otimes_k H$ between the right $H$-comodule structure via diagonal coaction and via $id \otimes \Delta$. 

By definition, we have the following exaxt sequences; 
\[ 0 \to M \to M \otimes H \to T(M) \to 0 \]
and
\[ 0 \to T^{-1}(M) \to M \otimes H \to M \to 0 .  
\]

We replace $M$ as $T'M$ in the first short exact sequence, then we have $TT' \simeq id$ and $T'T \simeq id$ in the homotopy category. 
Thus, the homotopy category $\underline{\mathcal{M}^{H}}$ and the derived category becomes triangulated. 

These suspention $T$ and desuspention $T^{-1}$ are useful when chasing exact sequence of cohomology as in \cite{Fari}. 
However, we will define a slightly different suspention and desuspention as following definition since we will realize them as left $A$-module objects for a right $H$-comodule algebra $A$ later in section $2.2$. 

\begin{defn}\label{choice}
Let $M$ be a right $H$-comodule. We will choice an injective embedding as follows. 
\begin{enumerate}[(i)]
\item First, since $H$ is coFrobenius, $\Lambda'$ is not a zero map, so that there exists 
$x \in H$ such that $\Lambda'(x)$ is not zero.
\item There exists finite dimensional $H$-subcomodule in $H$ that contains $x$. (we can take $x=\Sigma \epsilon (x_0)x_1$, where $x_1$ is regarded as a basis of $k$-vector space). We take one of these $k$-basis $x_1$s whose image under $\Lambda'$ is not zero. We write it as $x^1$.
\item If we take another $x^2$ as (ii), we see that $(x^1) \otimes H \cong (x^2) \otimes H$ as $H$-comodules since we have an $H$-comodule isomorphism $M \otimes H \cong V \otimes H$ as Lemma~\ref{314}, where $V$ is the underlying $k$-module. Thus we have $kx^1 \otimes H \cong kx^2 \otimes H$. Here, $kx^1$ and $kx^2$ are both $k$-vector space generated by $x^1$ and $x^2$, respectively
\item Then, we take an injective embedding: $ id \otimes x^1 : M \to M \otimes H$ : the right component is always $x^1$ of (ii). 
\end{enumerate}
\end{defn}

\begin{rem}[The existence of a functorial injective embedding]\label{choice}
For the trivial module $k$, if we choose a monomorphism $k \to B$ where $B$ is an injective $H$-comodule, by tensoring with this map, we obtain an injective embedding $M\to M\otimes B$ instead of $M\to M\otimes H$. 
When $H$ is coFrobenius, we can choose those injective $B$ to be finite dimensional. 
\end{rem}

\begin{defn}\label{sus}
We define $\Sigma (M)$ by the pushout diagram in $\mathcal{M}^H$
\[ 
 \xymatrix@1{
M \ar[d]_{id \otimes x}  \ar[r] & 0 \ar[d] \\ 
M \otimes H \ar[r] & \Sigma(M) 
} \] 
Here, $id \otimes x$ is an injective embedding we take as in Definition~\ref{choice}. Note that $\Sigma(M)$ is equivalent to $T(M)$ in $\underline{\mathcal{M}^{H}}$. 

As in \cite{Fari}, we define $\Sigma^{-1}(M)$ by the pullback in $\mathcal{M}^H$
\[ 
 \xymatrix@1{
 \Sigma^{-1}(M) \ar[r] \ar[d]  & 0 \ar[d]\\
M \otimes H  \ar[r]_{id \otimes \Lambda'} & M  
} \] 

\end{defn}

\subsection{The category of left modules over a monoid object $A$ in $\mathcal{M}^H$}
We say that a right $H$-comodule $A$ is an $H$-comodule algebra $A$ if it is a $k$-algebra $A$ together with a right $H$-comodule structure such that the multiplication map $A \otimes A \to A$ and the unit $k \to A$ are $H$-colinear, i.e., compatible with $H$-comodule structure. 

Let $\LMod_A(\mathcal{M}^H)$ be the category of left $A$-module objects in $\mathcal{M}^H$. We say an object $M$ in $\LMod_A(\mathcal{M}^H)$ an $H$-coequivariant $A$-module.

For an $H$-coequivariant $A$-module $M$ and right $H$-comodule $V$, we have an $H$-coequivariant $A$-module $M \otimes V$, 
where $A$-module structure in $M \otimes V$ is the one coming from $M$ and the right $H$-comodule structure is the diagonal one. 

As in \cite{Khov} and \cite{Qi}, we consider the restriction functor 
\[
\LMod_A(\mathcal{M}^H) \to \mathcal{M}^H
\]
and define $M$ in $\LMod_A(\mathcal{M}^H)$ to be contructible if $M$ is injective as right $H$-comodule. We say that a map $f : M \to N$ in $\LMod_A(\mathcal{M}^H)$ is a stable equivalence if it becomes an isomorphism in $\underline{\mathcal{M}^H}$. 




For $H$-coequivariant $A$-modules $M$ and $N$, we denote by $\Hom_A^H(M, N)$ the set of maps that are $H$-colinear and also $A$-linear. 

If $H$ is a coFrobenius Hopf algebras, for an $H$-comodule algebra $A$ and $H$-coequivariant $A$-modules $M$ and $N$, $\Hom_A(M, N)$ does not have an $H$-comodule structure in general~\cite[Section 7.1]{Fari}. 
Therefore, we regard right $H$-comodules $M$ and $N$ as left $H^\ast$-modules via the evaluation map $H \otimes H^\ast \to k$, 
we obtain a left $A$-module left $H^\ast$-module $M$. 

\begin{defn}\label{actf}
An $H^\ast$-action of $x \in H^\ast$ on $f : M \to N$ is given by $xf=(1 \otimes m^\ast_H(x))(\rho_N \otimes 1)(f \otimes S)\rho_M$. In other words, 
\[
(xf)(m)=x(f(m_{(0)})_{1}S(m_{(1)}))f(m_{(0)})_0. 
\] 

Then, $f$ is $H$-colinear if and only if it is $H^\ast$-linear, and  $f$ is $H$-colinear if and only if $xf=\epsilon (x) f = x(1) f$ for any $x \in H^\ast$~\cite[Proposition 7.6]{Fari}. 
\end{defn}


The set $\Hom_A$ itself may not be an $H$-comodule but admits $H^\ast$-action. So we can take "invariant of a  morphism space of $H^\ast$-linear maps". 

\begin{prop}
\label{equiv}
$\Hom_A(M, N)$ is an $H^{\ast}$-module. 
Explicitly, for an $A$-module map $f : M \to N$ between $H$-coequivariant $A$-modules, it is $H^\ast$-linear if and only if it is $H$-colinear. 

Moreover, any map $f \in \Hom_A^H(M, N)$ between $H$-coequivariant $A$-modules $M$ and $N$ can be regarded as a map in $\Hom_A(M, N)$ whose $H^\ast$-action as in Definition~\ref{actf} is the action given by the counit $\epsilon \in H^\ast$. 
\end{prop}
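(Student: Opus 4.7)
The plan is to establish the three claims of the proposition in turn by direct manipulations in Sweedler notation, relying on the $H$-colinearity of the structure maps of $A$, $M$, $N$ and the antipode axiom of $H$.

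First I would verify that the formula of Definition~\ref{actf},
\[ (xf)(m) = x\bigl(f(m_{(0)})_{(1)}\, S(m_{(1)})\bigr)\, f(m_{(0)})_{(0)}, \]
actually defines an $H^{\ast}$-module structure on $\Hom_A(M, N)$. Three points must be checked: that $xf$ remains $A$-linear whenever $f$ is, that $(xy)f = x(yf)$ for $x,y \in H^{\ast}$, and that the counit of $H^{\ast}$ acts as the identity. For $A$-linearity I will use that $A$ is an $H$-comodule algebra, so that $\rho_M(am) = a_{(0)} m_{(0)} \otimes a_{(1)} m_{(1)}$ and likewise in $N$, together with the antipode identity $a_{(1)} S(a_{(2)}) = \epsilon(a)\, 1_H$ in order to pull $a$ through the formula for $(xf)(am)$. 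Associativity reduces to coassociativity of $\rho_N$ combined with the pairing identity $(xy)(h) = x(h_{(1)}) y(h_{(2)})$ in $H^{\ast}$, and the unit axiom follows from counitality of $\rho_M$ and $\rho_N$ plus the antipode axiom $h_{(1)} S(h_{(2)}) = \epsilon(h)\, 1_H$.

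Next I would deduce the equivalence between $H^{\ast}$-linearity and $H$-colinearity for an $A$-linear $f$. The $H^{\ast}$-module structure on a right $H$-comodule $V$ coming from the fully faithful embedding into rational $H^{\ast}$-modules is $x \cdot v = x(v_{(1)})\, v_{(0)}$, so $f : M \to N$ is $H^{\ast}$-linear in this separate sense iff $x(m_{(1)})\, f(m_{(0)}) = x(f(m)_{(1)})\, f(m)_{(0)}$ for all $x \in H^{\ast}$ and $m \in M$. Since $H^{\ast}$ separates points of $H$, letting $x$ range shows this is equivalent to $f(m_{(0)}) \otimes m_{(1)} = f(m)_{(0)} \otimes f(m)_{(1)}$, which is precisely $H$-colinearity of $f$.

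For the moreover clause, I would compute $(xf)(m)$ directly when $f \in \Hom_A^H(M, N)$. The $H$-colinearity of $f$ gives $f(m_{(0)})_{(0)} \otimes f(m_{(0)})_{(1)} = f(m_{(0)(0)}) \otimes m_{(0)(1)}$. Applying coassociativity of $\rho_M$ rewrites the argument of $x$ as $m_{(1)(1)} S(m_{(1)(2)})$, which collapses via the antipode axiom to $\epsilon(m_{(1)})\, 1_H$. Counitality then yields $(xf)(m) = x(1_H)\, f(m) = \epsilon(x)\, f(m)$, matching the formula stated in Definition~\ref{actf}. The main obstacle I anticipate is the $A$-linearity of $xf$, since this requires tracking the Sweedler indices of $\rho_A$, $\rho_M$, $\rho_N$, and the antipode simultaneously, and it is easy to miscount antipode insertions; the other verifications are essentially formal once the correct identities are assembled.
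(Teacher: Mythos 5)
Your overall strategy differs from the paper's: the paper disposes of the first claim by citing Qi and Farinati's Proposition 7.6 (already quoted in Definition~\ref{actf}) and then only records that the invariants $L^{H^\ast}=\{f \mid xf=x(1)f\}$ of $L=\Hom_A(M,N)$ are exactly the $A$-linear $H$-colinear maps, whereas you verify everything by hand in Sweedler notation. Your computation of the ``moreover'' clause is correct (colinearity turns the argument of $x$ into $m_{(1)(1)}S(m_{(1)(2)})=\epsilon(m_{(1)})1_H$, giving $(xf)(m)=x(1_H)f(m)$), and your separation-of-points argument identifying $H^\ast$-linearity for the rational structures with $H$-colinearity is standard and fine.

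However, the step you yourself flag as delicate --- $A$-linearity of $xf$ --- is a genuine gap, and the antipode identity $a_{(1)}S(a_{(2)})=\epsilon(a)1_H$ will not rescue it. Expanding with $\rho_M(am)=a_{(0)}m_{(0)}\otimes a_{(1)}m_{(1)}$, $f(a_{(0)}m_{(0)})=a_{(0)}f(m_{(0)})$ and coassociativity of $\rho_A$, one finds
\[
(xf)(am)=x\bigl(a_{(1)}\,f(m_{(0)})_{(1)}S(m_{(1)})\,S(a_{(2)})\bigr)\,a_{(0)}\,f(m_{(0)})_{(0)},
\]
so the two Sweedler legs of $a$ are \emph{not adjacent}: they sandwich $f(m_{(0)})_{(1)}S(m_{(1)})$, producing an adjoint-type coaction $a\mapsto a_{(0)}\otimes a_{(1)}(-)S(a_{(2)})$ rather than $\epsilon(a)$. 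For the formula exactly as printed in Definition~\ref{actf} this is not equal to $a\,(xf)(m)$ in general (take $A=H$ noncommutative and $a$ grouplike: one gets $x(gwg^{-1})g$ versus $x(w)g$). The cancellation you want only occurs when the antipode is placed so that the two $a$-legs become adjacent, exactly as in Qi's module-side action $(hf)(m)=h_{(2)}f(S^{-1}(h_{(1)})m)$; the comodule analogue is $(xf)(m)=x\bigl(S^{-1}(m_{(1)})f(m_{(0)})_{(1)}\bigr)f(m_{(0)})_{(0)}$, for which $S^{-1}(a_{(2)})a_{(1)}=\epsilon(a_{(1)})1_H$ collapses adjacent legs and $A$-linearity does hold (and the ``moreover'' computation goes through verbatim). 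So you must either work with this corrected placement of the antipode, or restrict the first claim to $\Hom_k(M,N)$; as written, your plan for (a) would fail at the point where you ``pull $a$ through.'' Note that this does not affect the parts of the proposition the paper actually uses later, which concern only the invariant maps.
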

Note that any map $f \in \Hom_A(M, N)$ inherits an $H^\ast$-action as in Definition~\ref{actf}. 
\begin{proof}
The first statement is well-known as in \cite{Qi} and followed by the fact that $f$ is $H$-colinear if and only if $xf=\epsilon (x) f = x(1) f$ for any $x \in H^\ast$. 

For the second statement, if $L$ is an $H^\ast$-module, one may define 
$L^{H^\ast}=\{ m \in L | xm=x(1)m \} \cong \Hom_{H^\ast}(k, L)$. 
If $L$ stands for $\Hom_A(M, N)$, then $L^{H^\ast}$ consists of $A$-linear and $H^\ast$-linear maps. 
\end{proof}
Note that if $L$ is a right $H$-comodule, we have $L^{coH}=\{ m \in L | \rho (m)=m \otimes 1 \}=L^{H^{\ast}}$.

If $M$ and $N$ are objects in $\LMod_A(\mathcal{M}^H)$, we denote by $I_A(M, N)$ the subset of $\Hom^H_A(M, N)$ consisting of maps that factors through an $H$-coequivariant $A$-modules that are injective as right $H$-comodules. 
The stable homotopy category denoted by $\lmod_A(\mathcal{M}^H)$ is defined as the category with same objects as $\LMod_A(\mathcal{M}^H)$ but morphism 
\[
\Hom_{\lmod_A(\mathcal{M}^H)}(M, N)= \Hom_A^H(M, N) / I_A(M, N) . 
\]

The derived category $D_H(A)$ is defined by formally inverting stable equivalences in the homotopy category $\rmod_A(\mathcal{M}^H)$.

Especially, the projective objects are preserved via the exact functor
$$\underline{\mathrm{U}} : \lmod_A(\mathcal{M}^H) \to \underline{\mathcal{M}^H}$$ induced from the forgetful functor. 

The suspention $\Sigma (M)$ and desuspention $\Sigma^{-1}(M)$ in $\mathcal{M}^H$ actually gives objects in $\LMod_A(\mathcal{M}^H)$. 

From now on, for a right $H$-comodule map $\Lambda' : H \to k$, that is the composition of an integral $\Lambda : H \to k$ with the antipode $S : H \to H$. 


%

\begin{rem}\label{cmdstr}
Let $X$ be an $H$-coequivariant $A$-module. In \cite{Fari}, the structure map $\rho : X \to X \otimes H$ is regarded as an injective embedding. 
If we regard the field $k$ as the trivial $H$-comodule via $\eta$ and the Hopf algebra $H$ as the right $H$-comodule via the comultiplication $\Delta$, the map $\eta$ is coming to be a right $H$-comodule map. 

Note that $id_X \otimes \eta : X \to X \otimes H$, $id_X \otimes \varepsilon : X \otimes H \to X$ and the identity map on $X$ are left $A$-module maps but $id \otimes \varepsilon$ is not a right $H$-comodule map. Here $X \otimes H$ is endowed with a right $H$-comodule structure by $id \otimes \Delta$. 


Assume that $X \otimes H$ is a right $H$-comodule by $id \otimes \Delta$. 
Let us consider the following exact sequences on $k$-vector space level. 
\[ 
 \xymatrix@1{
X  \ar[r]^-{\rho} & X \otimes H \ar[r]^-{id_X \otimes \varepsilon} & X .
} \] 
Note that $\rho$ is a map of right $H$-comodules. 
Since $H$ is a bialgebra, this is a split exact sequence on $k$-vector space level. 
On the other hand, we have an exact sequence of $k$-vector space 
\[ 
 \xymatrix@1{
X  \ar[r]^-{id_X \otimes \eta} & X \otimes H \ar[r]^-{id_X \otimes \varepsilon} & X .
} \] 
This is also a split exact sequence on $k$-vector space level and also splits as an exact sequence of left $A$-modules when we leave the $H$-comodule structures. Note that $A$ acts only on each left component and $id \otimes \eta$ is a map of $H$-coequivariant left $A$-modules. 


\end{rem}

We define a mapping cylinder and a mapping cocylinder.  
\begin{defn}
We define an $H$-coequivariant $A$-module $C_f$ by the pushout
\[ 
 \xymatrix@1{
X \ar[r]^f \ar[d]_{id \otimes x}
 & Y \ar[d]  \\
X \otimes H \ar[r] & C_f .
} \] 
Here, we regard $X \otimes H$ as a right $H$-comodule via the structure map $id \otimes \Delta$. 

As the following diagram and Definition~\ref{sus}, we obtain the map $C_f  \to \Sigma X$ by the universality. 
\[ 
 \xymatrix@1{
X \ar[r]^f \ar[d]_{id \otimes x} & Y \ar[d] \ar[r] & 0 \ar[dd]  \\
X \otimes H \ar[r] \ar[d]_{id} & C_f  \ar@{.>}[dr] &\\
X \otimes H \ar[rr] & & \Sigma X 
} \] 

We have the commutative diagram
\[ 
 \xymatrix@1{
X \ar[r]^f \ar[d]_{id_X \otimes x} & Y \ar[d] \ar[rdd]^{id_Y} & \\
X \otimes H \ar[r] \ar[drr]_{f \otimes \Lambda'} & C_f  \ar@{.>}[dr] &\\
 & & Y \otimes k \cong Y .  
} \] 

Similarly, we define an $H$-coequivariant $A$-module $P_f$ by the pullback

\[ 
 \xymatrix@1{
P_f \ar[r] \ar[d]& X \ar[d]^f  \\
Y \otimes H \ar[r]_{id_Y \otimes \Lambda'} & Y \otimes_k k \cong Y.
} \] 


As the following diagram and Definition~\ref{sus}, we obtain the map $\Sigma^{-1}Y \to P_f$ by the universality. 
\[ 
 \xymatrix@1{
 \Sigma^{-1}Y \ar@{.>}[dr] \ar[dd] \ar[rr] & & 0 \ar[d]\\
  & P_f \ar[r] \ar[d]& X \ar[d]^f  \\
Y \otimes H \ar[r]_{id} & Y \otimes H \ar[r]_{id_Y \otimes \Lambda'} & Y  
} \] 

We have the commutative diagram
\[ 
 \xymatrix@1{
X \ar[drr]^{id_X} \ar[ddr]_{f \otimes 1} \ar@{.>}[dr] & & \\
  & P_f \ar[r] \ar[d]& X \ar[d]^f  \\
& Y \otimes H \ar[r]_{id_Y \otimes \Lambda'} & Y  .
} \] 
\end{defn} 

As a variant of \cite[Lemma 4.3]{Qi} and also \cite[Lemma 2.10]{Fari}, we have the following lemma of standard triangles. 
\begin{lemma}\label{Asplit}
For a map $f : X \to Y$ in $\LMod_A(\mathcal{M}^H)$, the maps $Y \to C_f$ and $P_f \to X$ are $A$-split. 

Especially, two short exact sequences 
\[
0 \to Y \to C_f \to \Sigma X \to 0
\]
and 
\[
0 \to \Sigma^{-1}Y \to P_f \to X \to 0
\]
are $A$-split. 

Moreover, the both two exact sequences induce a triangle $X \to Y \to C_f \to \Sigma X$ in the homotopy category $\lmod_A(\mathcal{M}^H)$.


\end{lemma}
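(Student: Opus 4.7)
The plan is to construct explicit $A$-linear splittings directly from the universal properties defining $C_f$ and $P_f$, and then to deduce the triangle in the homotopy category via the standard Frobenius category construction. Throughout I assume the chosen element $x \in H$ has been normalized so that $\Lambda'(x) = 1$, which is legitimate by Definition~\ref{choice} since $\Lambda'(x) \ne 0$.

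For the $A$-splitting of $Y \to C_f$, I would apply the universal property of the defining pushout to the pair of $A$-linear maps $\mathrm{id}_Y : Y \to Y$ and $f \otimes \Lambda' : X \otimes H \to Y$, $m \otimes h \mapsto \Lambda'(h) f(m)$. The map $f \otimes \Lambda'$ is $A$-linear because the $A$-action on $X \otimes H$ is concentrated in the left factor, and the cocone condition $(f \otimes \Lambda') \circ (\mathrm{id} \otimes x) = f$ reduces precisely to $\Lambda'(x) = 1$. Since both limits and colimits in $\LMod_A(\mathcal{M}^H)$ are computed on the underlying vector spaces, the induced map $r : C_f \to Y$ is a genuine $A$-linear retraction of $Y \to C_f$. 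Dually, I would apply the universal property of the pullback defining $P_f$ to the cone $\mathrm{id}_X : X \to X$ together with $s : X \to Y \otimes H$, $m \mapsto f(m) \otimes x$, obtaining an $A$-linear section $X \to P_f$ of $P_f \to X$. The resulting splittings are not $H$-colinear in general, which is precisely why $\Sigma X$ and $\Sigma^{-1} Y$ carry nontrivial information in the stable category.

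From the two $A$-splittings, both short exact sequences split as sequences of underlying $A$-modules, and the standard Frobenius-category / Happel construction (cf.~\cite[Lemma 4.3]{Qi}, \cite[Lemma 2.10]{Fari}) then promotes each to a distinguished triangle in $\lmod_A(\mathcal{M}^H)$. The first sequence $0 \to Y \to C_f \to \Sigma X \to 0$ directly produces the triangle $X \to Y \to C_f \to \Sigma X$. For the second, applying the suspension autoequivalence to $0 \to \Sigma^{-1} Y \to P_f \to X \to 0$ yields a distinguished triangle beginning with $X \xrightarrow{f} Y$, which by the cone axiom of a triangulated category must coincide with the first up to a (non-canonical) isomorphism; in particular $\Sigma P_f \simeq C_f$ in $\lmod_A(\mathcal{M}^H)$.

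The main technical obstacle is this last identification of the two triangles. While abstractly guaranteed by the triangulated structure, producing a concrete comparison map $\Sigma P_f \to C_f$ requires organizing $C_f$ and $P_f$ into a joint diagram through the auxiliary terms $X \otimes H$ and $Y \otimes H$, which are injective as right $H$-comodules, hence contractible in $\lmod_A(\mathcal{M}^H)$. The induced comparison map is then an isomorphism in the stable category precisely because these auxiliary terms vanish there.
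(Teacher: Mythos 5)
Your proof is correct and follows essentially the same route as the paper: the $A$-splittings of $Y \to C_f$ and $P_f \to X$ are induced from the $A$-split maps $\mathrm{id}\otimes x$ and $\mathrm{id}\otimes\Lambda'$ via the universal properties of the pushout and pullback (the retraction $f\otimes\Lambda'$ and the section you build are exactly the auxiliary diagrams the paper records when defining $C_f$ and $P_f$), and the triangle is obtained by the standard Qi--Farinati construction using that the middle terms $X\otimes H$, $Y\otimes H$ are injective, hence vanish stably. You merely make explicit the normalization $\Lambda'(x)=1$ and the identification of the two triangles, both of which the paper leaves implicit.
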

\begin{proof}
Since $id_X \otimes x : X \to X \otimes H$ and $id_Y \otimes \Lambda' : Y \otimes H \to Y$ are $A$-split, the first assertion follows. 

The last claim follows since we have a short exact sequence $0 \to X \otimes H \to C_f \to \mathrm{Coker}(f) \to 0$, and so $C_f \simeq \mathrm{Coker}(f)$ in the homotopy category. 


\end{proof}

Thus, $\lmod_A(\mathcal{M}^H)$ is a triangulated category.

\section{A model structure on the category of $H$-coequivariant $A$-modules}
Now, we use the adjunction
\[
A \otimes_k (-) : \mathcal{M}^H \rightleftarrows \LMod_A(\mathcal{M}^H) : U , 
\]
where $U$ is the functor which regards $H$-coequivariant $A$-modules as $H$-comodules and $A \otimes_k (-)$ is the left adjoint, which is given by the free functor.

By using the stable model structure on $\mathcal{M}^H$ given in Definition~\ref{mh} and this adjunction, we take the following three classes of maps in the category 
$\LMod_A(\mathcal{M}^H)$. 

\begin{defn}\label{modelstr}
Let $f : X \to Y$ be a map in $\LMod_A(\mathcal{M}^H)$. 
\begin{enumerate}[(i)]
\item We say that $f$ is a weak equivalence if $Uf$ is a weak equivalence. 
\item We say that $f$ is a fibration if $Uf$ is a fibration. 
\item We say that $f$ is a cofibration if $f$ has the left lifting property with respect to trivial fibrations. 
\end{enumerate}
\end{defn}

We will see that the three classes of maps as in Definition~\ref{modelstr} defines a model structure on the category $\LMod_A(\mathcal{M}^H)$. Note that the category $\LMod_A(\mathcal{M}^H)$ is abelian so it is bicomplete. The proof will have many parallels with the case of $A \# H$-modules in \cite{Ohara}

\begin{lemma}[2 out of 3]
Let $f$, $g$ and $g \circ f$ be morphisms in $\LMod_A(\mathcal{M}^H)$. If two of the three  morphisms are weak equivalences in $\LMod_A(\mathcal{M}^H)$, then, so is the third. 
\end{lemma}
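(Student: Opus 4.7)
The plan is to reduce this immediately to the 2-out-of-3 axiom for the stable model structure on $\mathcal{M}^H$, which is already available via Proposition~\ref{mh}. By definition of the three distinguished classes in $\LMod_A(\mathcal{M}^H)$ (Definition~\ref{modelstr}), a morphism $f$ in $\LMod_A(\mathcal{M}^H)$ is a weak equivalence precisely when $Uf$ is a weak equivalence in $\mathcal{M}^H$, where $U : \LMod_A(\mathcal{M}^H) \to \mathcal{M}^H$ is the forgetful functor.

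First I would observe that $U$ is a (strict) functor, so it preserves composition: $U(g \circ f) = Ug \circ Uf$ as morphisms in $\mathcal{M}^H$. Then given $f$, $g$ and $g \circ f$ in $\LMod_A(\mathcal{M}^H)$, their images under $U$ form a composable pair $Uf, Ug$ together with their composite $U(g \circ f)$ in $\mathcal{M}^H$. If two of the three morphisms $f$, $g$, $g \circ f$ are weak equivalences in $\LMod_A(\mathcal{M}^H)$, then by definition two of the three morphisms $Uf$, $Ug$, $U(g \circ f)$ are weak equivalences in $\mathcal{M}^H$.

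Now I would invoke the 2-out-of-3 axiom for the stable model structure on $\mathcal{M}^H$ established in Proposition~\ref{mh}: the third morphism among $Uf$, $Ug$, $U(g \circ f)$ is also a weak equivalence in $\mathcal{M}^H$. Translating back through the definition of weak equivalence in $\LMod_A(\mathcal{M}^H)$, this shows that the corresponding third morphism in $\LMod_A(\mathcal{M}^H)$ is a weak equivalence. There is no real obstacle here; the argument is entirely formal, simply because weak equivalences in $\LMod_A(\mathcal{M}^H)$ are defined as the preimage under a functor of a class of morphisms already satisfying 2-out-of-3. The more substantive work, which I would defer to subsequent lemmas, is verifying the lifting axioms and the factorization axioms for the candidate model structure.
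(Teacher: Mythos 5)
Your argument is correct and is essentially identical to the paper's own proof: both reduce the claim to the 2-out-of-3 axiom in the stable model structure on $\mathcal{M}^H$ by using that the forgetful functor $U$ preserves composition and that weak equivalences in $\LMod_A(\mathcal{M}^H)$ are by definition the preimage under $U$ of the weak equivalences in $\mathcal{M}^H$. The paper states this more tersely, but the content is the same.
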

\begin{proof}
Since $U$ is a covaiant functor, the composition $U(f) \circ U(g)$ is $U(f \circ g)$. The assertions follow from restricting these maps on $H$-comodules under the functor $U$.  
\end{proof}
\begin{lemma}
Let $f$ and $g$ be maps of $\LMod_A(\mathcal{M}^H)$ such that $f$ is a retract of $g$, i.e., $f$ and $g$ satisfies the following commutative diagram. 
\[ 
 \xymatrix@1{
M \ar[d]_f \ar[r] & C \ar[r] \ar[d]_g & M \ar[d]_f \\
N \ar[r] & D \ar[r] & N 
} \] 
where the horizontal composites are identities. 

If $g$ is a weak equivalence, cofibration, or fibration, respectively, then so is the third. 
\end{lemma}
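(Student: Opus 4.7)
The plan is to dispatch the three cases separately, reducing the first two to the known retract axiom in $\mathcal{M}^H$ (guaranteed by Proposition~\ref{mh}), and treating the cofibration case by the standard lifting-property argument.

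First, suppose $g$ is a weak equivalence or a fibration. Applying the forgetful functor $U$ to the given retract diagram yields a retract diagram
\[
\xymatrix@1{
U(M) \ar[d]_{U(f)} \ar[r] & U(C) \ar[r] \ar[d]_{U(g)} & U(M) \ar[d]_{U(f)} \\
U(N) \ar[r] & U(D) \ar[r] & U(N)
}
\]
in $\mathcal{M}^H$ whose horizontal composites remain the identity, since $U$ is a functor. By Proposition~\ref{mh}, $\mathcal{M}^H$ carries the stable model structure, and the retract axiom therein implies that $U(f)$ is a weak equivalence (resp.\ a fibration) whenever $U(g)$ is. By Definition~\ref{modelstr}(i) and (ii), this means $f$ is a weak equivalence (resp.\ a fibration).

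Next, suppose $g$ is a cofibration. I would prove the formal statement that any class of maps characterized by a left lifting property is closed under retracts. Concretely, given a trivial fibration $p : E \to B$ in $\LMod_A(\mathcal{M}^H)$ and a commutative square with $f$ on the left and $p$ on the right, one forms the enlarged diagram
\[
\xymatrix@1{
M \ar[d]_{f} \ar[r] & C \ar[r] \ar[d]_{g} & M \ar[d]_{f} \ar[r] & E \ar[d]^{p} \\
N \ar[r] & D \ar[r] & N \ar[r] & B
}
\]
obtained by composing the given retract diagram with the original square. The outer rectangle has $g$ on the left, so by the left lifting property of $g$ against trivial fibrations there is a lift $\ell : D \to E$ filling the outer square. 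Composing $\ell$ with the map $N \to D$ from the retract diagram yields a map $N \to E$ which, using that the horizontal composites are identities, is checked to be a lift for the original square with $f$ on the left. Hence $f$ has the left lifting property with respect to trivial fibrations and is a cofibration by Definition~\ref{modelstr}(iii).

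I do not anticipate a main obstacle: each case is either a direct transport of the retract axiom across the functor $U$ or the formal diagram chase underlying the fact that left lifting classes are retract-closed. The only mild point worth being careful about is verifying that the lift $\ell \circ (N \to D)$ actually makes the required two triangles in the lifting problem for $f$ commute, which follows immediately from the identity $M \to C \to M = \mathrm{id}_M$ and $N \to D \to N = \mathrm{id}_N$.
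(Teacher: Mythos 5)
Your proof is correct and follows essentially the same route as the paper: the weak equivalence and fibration cases are transported through $U$ to the retract axiom of the stable model structure on $\mathcal{M}^H$, and the cofibration case is the standard retract-closure of left lifting classes via the enlarged diagram. Your write-up is somewhat more explicit about verifying the lift, but the argument is the same.
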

\begin{proof}
When $g$ is a weak equivalence or fibration, respectively, we can see that a retract $f$ of $g$ is also a weak equivalence or fibration, respectively, by regarding the diagram as a diagram of $H$-modules via the functor $U$.  
If the map $g$ is a cofibration, for any trivial fibration $X \to Y$, 
the map $g$ has the left lifting property as follows
\[ 
 \xymatrix@1{
M \ar[r] \ar[d]_f & C \ar[d]_g \ar[r]& M \ar[d]_f \ar[r] & X \ar[d] \\
N \ar[r] &D \ar[r] \ar@{.>}[urr] & N \ar[r] & Y ,
} \] 
so we have  the left lifting of $f : M \to N$ with respect to $X \to Y$. 
\end{proof}

The lifting propeties for trivial fibrations are obvious and, for pairs of trivial cofibrations and fibrations, notice that the cokernel of a trivial cofibration as $H$-coequivariant $A$-module is an injective $H$-comodule. We obtain the certain form of direct sum of $H$-comodules and obtain the desired $H$-coequivariant $A$-module map by Proposition~\ref{equiv}. 
\begin{prop}[Factorization]
For any map $f : X \to Y$, we have a factorization $f : X \to E \to Y$ for some $E$, where $X \to E$ is a cofibration and $E \to Y$ is a trivial fibration. Also we have a factorization $f : X \to E' \to Y$, where $X \to E'$ is a trivial cofibration, and $E' \to Y$ is a fibration. 
\end{prop}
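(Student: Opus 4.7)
The plan is to apply Quillen's small object argument to sets of generators transferred from the stable model structure on $\mathcal{M}^H$ via the free/forgetful adjunction $A \otimes_k (-) \dashv U$, mirroring the approach taken for $A \# H$-modules in~\cite{Ohara}. Since $\mathcal{M}^H$ is locally presentable and its stable model structure from Proposition~\ref{mh} is cofibrantly generated, fix sets $I_{\mathcal{M}^H}$ of generating cofibrations and $J_{\mathcal{M}^H}$ of generating trivial cofibrations; set $I_A = \{A \otimes_k i : i \in I_{\mathcal{M}^H}\}$ and $J_A = \{A \otimes_k j : j \in J_{\mathcal{M}^H}\}$ in $\LMod_A(\mathcal{M}^H)$.

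For any $f : X \to Y$, applying the small object argument to $I_A$ produces $f = q \circ i$ with $i$ an $I_A$-cell complex and $q$ having the right lifting property against $I_A$. By the adjunction, this RLP is equivalent to $U(q)$ having the RLP against $I_{\mathcal{M}^H}$, which in $\mathcal{M}^H$ characterizes trivial fibrations; hence $q$ is a trivial fibration in $\LMod_A(\mathcal{M}^H)$, and $i$, being an $I_A$-cell complex, has the left lifting property against any such trivial fibration and so is a cofibration. Running the small object argument on $J_A$ analogously yields $f = p \circ j$ with $p$ a fibration (by the same adjunction translation, $U(p)$ being a fibration in $\mathcal{M}^H$) and $j$ a $J_A$-cell complex; what remains is to show that $j$ is a weak equivalence, promoting it to a trivial cofibration.

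This last verification is Kan's acyclicity condition for the transferred (right-induced) model structure and is the main obstacle. The key observation is that each generating trivial cofibration $g \in J_{\mathcal{M}^H}$ may be chosen to be a monomorphism whose cokernel is an injective $H$-comodule of the form $V \otimes H$; then the cokernel of $A \otimes_k g$ is $A \otimes_k V \otimes_k H$, which, equipped with the diagonal coaction, is isomorphic by Lemma~\ref{314} to $W \otimes H$ for some $k$-vector space $W$, hence injective as an $H$-comodule. Thus $A \otimes_k g$ is a monomorphism with injective cokernel in $\LMod_A(\mathcal{M}^H)$, a stable equivalence by the Frobenius structure of $\mathcal{M}^H$, and therefore a weak equivalence. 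The class of monomorphisms with injective cokernel is closed under pushouts (the pushout of such a monomorphism along any map retains the same cokernel in the abelian category $\mathcal{M}^H$) and under transfinite compositions (using that $\mathcal{M}^H$ is locally noetherian for coFrobenius $H$, so filtered colimits of injectives remain injective), so every $J_A$-cell complex is a weak equivalence, completing the verification of the factorization axiom.
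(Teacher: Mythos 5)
There is a genuine gap at the foundation of your argument: you assert that the stable model structure on $\mathcal{M}^H$ from Proposition~\ref{mh} is cofibrantly generated, and moreover that the generating trivial cofibrations can be chosen to be monomorphisms with cokernel of the form $V\otimes H$. Neither claim is available from what you cite. Li's theorem \cite{Li} produces the model structure on an arbitrary Frobenius category with no statement about generation (and for an arbitrary Frobenius category no such statement can hold), and local presentability of $\mathcal{M}^H$ \cite{Porst} does not by itself make a *given* model structure cofibrantly generated. Since everything else in your proof --- both applications of the small object argument and the identifications of the maps with the right lifting property against $I_A$ (resp.\ $J_A$) with the trivial fibrations (resp.\ fibrations) --- rests on the existence of the sets $I_{\mathcal{M}^H}$ and $J_{\mathcal{M}^H}$, this input must be supplied. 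It is true and fillable: because $H$ is coFrobenius, $H$ is an injective, hence projective, generator of $\mathcal{M}^H$, so the single map $0\to H$ (a monomorphism with injective cokernel $H=k\otimes H$) already detects epimorphisms, i.e.\ fibrations, and may serve as $J_{\mathcal{M}^H}$; for $I_{\mathcal{M}^H}$ one can take the subcomodule inclusions $S\hookrightarrow H$, using a Baer-type criterion in the locally finite Grothendieck category $\mathcal{M}^H$ to see that right lifting against these forces the kernel of an epimorphism to be injective. You should also record that $U$ preserves pushouts and filtered colimits (the monad $A\otimes_k(-)$ is exact and cocontinuous since $k$ is a field), which you use implicitly when computing cokernels of pushouts and transfinite compositions in $\mathcal{M}^H$.

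Apart from this, your route is genuinely different from the paper's and the rest of it is sound. The paper argues by hand: it factors $f$ through the mapping cocylinder $P_u$ of a map out of the cofibrant replacement $QC_f$ of the cone $C_f$ (with Proposition~\ref{cofrep} supplying that replacement via a bar construction), and dually through the cone $C_v$ on a fibrant replacement of $P_f$, exploiting the $A$-split exact sequences of Lemma~\ref{Asplit}; no generation hypotheses are needed, but the construction is ad hoc and its functoriality rests on the explicit bar resolution. Your transfer argument is more systematic and would additionally yield that the transferred structure is cofibrantly generated; your acyclicity check --- that $A\otimes_k g$ is a monomorphism with injective cokernel after applying $U$ (via Lemma~\ref{314} and the fact that injectives are summands of comodules $V\otimes H$), and that this class is closed under pushout, coproduct and transfinite composition because $\mathcal{M}^H$ is locally noetherian --- is correct and is exactly the right substitute for the paper's explicit constructions. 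Once the generating sets are exhibited, your proof is complete.
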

\begin{proof}
It suffices to show that, if we have a cofibrant replacement and fibrant replacement, respectively, we obtain the desired factorization of this lemma as follows. A cofibrant replacement is constructed in the following Proposition~\ref{cofrep} and all objects are fibrant.

For a cofibrant replacement $QC_f \to C_f$ of $C_f$, where the map $QC_f \to C_f$ is a trivial fibration, we have the following diagram : 
\[ 
 \xymatrix@1{
X \ar@{.>}[r] \ar[d]_{=} & P_u \ar@{.>}[d] \ar[r] & QC_f \ar[d]_{\simeq} \ar[r]^u & \Sigma X \ar[d]_{=} \\
X \ar[r]^f & Y \ar[r] & C_f \ar[r] & \Sigma X .
} \] 
Two vertial maps in the middle square is surjective and two horizontal maps in the middle square is $A$-split. We define the map $u$ as a map the right square commutes. We construct the map $P_u \to Y$ by using $A$-splittings, Lemma~\ref{Asplit} and Proposition~\ref{equiv}. We have the map $\Sigma^{-1} \Sigma X \to P_u$ by the universality of desuspension. The map $P_u \to Y$ is surjective since it is composed of two surjections. Passing to the quotient triangulated category, the map $P_u \to Y$ is a weak equivalence. 

The map $X \to P_u$ is a cofibration. 
Consider the following diagram
\[ 
 \xymatrix@1{
X  \ar[d] \ar[r] & M \ar[d] \\
P_u \ar[d] \ar[r] & N  \\
QC_f \ar@{.>}[uur] &  
} \] 
where $M \to N$ is a trivial fibration and $P_u \to QC_f$ is $A$-split. Since $QC_f$ is cofibrant, we have the dotted arrow. By using $A$-splitting endowed with trivial $H$-coaction, we have the lifting $P_u \to M$. 

For a fibrant replacement $P_f \to RP_f$, where the map $P_f \to RP_f$ is a trivial cofibration, we have the following diagram : 

\[ 
 \xymatrix@1{
\Sigma^{-1}Y \ar[r] \ar[d]_{=} & P_f \ar[d]_{\simeq} \ar[r] & X \ar@{.>}[d] \ar[r]^f & Y \ar[d]_{=} \\
\Sigma^{-1}Y \ar[r]^v & RP_f \ar[r] & C_v \ar@{.>}[r] & Y .
} \] 
Note that two horizontal maps in the middle square is $A$-split. The map $v$ is defined to be the left square commutes. We construct the map $X \to C_v$ by using $A$-splittings, Lemma~\ref{Asplit} and Proposition~\ref{equiv}. We have the map $C_v \to \Sigma \Sigma^{-1}Y$ by the universality of suspension. The map $X \to C_v$ is a stable equivalence since it is composed of two stable equivalences. 

Note that $X \to C_v$ is also cofibration since $A$-split monomorphisms are included in the class of cofibrations. 

The map $C_v \to Y$ is a fibration. 
Consider the following diagram
\[ 
 \xymatrix@1{
 & RP_f \ar[d] \\
M \ar[d] \ar[r] & C_v \ar[d] \\
N \ar[r]  \ar@{.>}[uur] &  Y
} \] 
where $M \to N$ is a trivial cofibration and $RP_f \to C_v$ is $A$-split. Since $RP_f$ is fibrant, we have the dotted arrow. By using $A$-splitting endowed with trivial $H$-coaction, we have the lifting $N \to C_v$. 
\end{proof}

\subsection{Functorial cofibrant replacement for $H$-coequivariant $A$-modules}

\begin{defn}\label{QiCof}
Let $M$ be an $H$-coequivariant $A$-module. 
$M$ is said to be cofibrant if for any surjective stable equivalence $L \to N$ of  $H$-coequivariant $A$-modules, the induced map of $k$-vector spaces
\[
\Hom_A^H(M, L) \to \Hom_A^H(M, N)
\]
is surjective. 

%
%
\end{defn}

As the similar argument in \cite{Qi}, we will construct a functorial cofibrant replacement. 
\begin{prop}[Functorial cofibrant replacement]\label{cofrep}
For each $H$-coequivariant $A$-module $M$, there is a short exact sequence in  $\LMod_A(\mathcal{M}^H)$ which is split exact as a sequence of $A$-modules,
$0 \to M \to aM \to \bar{p}M \to 0$
where $\bar{p}M$, and also $p(M)=\Sigma^{-1}(\bar{p}(M))$ is cofibrant and aM is an contructible and a left $A$-module. The construction of
the short exact sequence is functorial in $M$.
\end{prop}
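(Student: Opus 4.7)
The plan is to adapt Qi's construction from \cite{Qi} to the present setting. For each $M \in \LMod_A(\mathcal{M}^H)$, set $aM := M \otimes H$ endowed with the $H$-coequivariant $A$-module structure arising from the $A$-action on the first factor and the diagonal $H$-coaction; by Lemma~\ref{314}, $aM$ is $H$-colinearly isomorphic to the cofree comodule on the underlying vector space of $M$, hence injective in $\mathcal{M}^H$ and therefore contructible. Take $\iota_M : M \to aM$ to be the functorial $H$-colinear $A$-linear embedding afforded by Definition~\ref{choice} and Lemma~\ref{314}, set $\bar{p}M := \mathrm{Coker}(\iota_M)$, and let $p(M) := \Sigma^{-1}\bar{p}M$. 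This yields the short exact sequence $0 \to M \to aM \to \bar{p}M \to 0$ in $\LMod_A(\mathcal{M}^H)$, manifestly functorial in $M$. For the $A$-splitting, the map $\mathrm{id}_M \otimes \Lambda' : aM \to M$ (normalised so that $\Lambda'$ evaluates to $1$ on the chosen element of Definition~\ref{choice}) is $A$-linear and retracts $\iota_M$, as foreshadowed in Remark~\ref{cmdstr}; hence $aM \cong M \oplus \bar{p}M$ as left $A$-modules.

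The main step is verifying cofibrance of $\bar{p}M$ in the sense of Definition~\ref{QiCof}. Let $f : L \twoheadrightarrow N$ be a surjective stable equivalence, so that $K := \ker f$ is injective as an $H$-comodule. Given $\phi : \bar{p}M \to N$ in $\LMod_A(\mathcal{M}^H)$, precompose with the projection $\pi : aM \twoheadrightarrow \bar{p}M$ to obtain $g := \phi\pi : aM \to N$, which is $A$-linear, $H$-colinear, and vanishes on the $M$-summand of the decomposition $aM \cong M \oplus \bar{p}M$. Injectivity of $aM$ as $H$-comodule combined with $H$-injectivity of $K$ produces an $H$-colinear lift $\tilde{g} : aM \to L$ of $g$. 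To promote $\tilde{g}$ to a simultaneously $A$-linear map, I would apply Proposition~\ref{equiv}: the set $\Hom_A(aM, L)$ carries an $H^\ast$-action whose invariants are $\Hom_A^H(aM, L)$, and coFrobeniusness of $H$ supplies an integral used to average an $A$-linear preimage of $g$ into an $H^\ast$-invariant (hence $H$-colinear and $A$-linear) one. The $A$-decomposition allows us to arrange vanishing on the $M$-summand, so the resulting lift descends to $\bar{p}M \to L$. Cofibrance of $p(M) = \Sigma^{-1}\bar{p}M$ then follows by dualising the argument via the pullback definition of $\Sigma^{-1}$ from Definition~\ref{sus}.

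The main obstacle is the upgrade from an $H$-colinear lift to one that is simultaneously $A$-linear: unlike Qi's $A \# H$-module setting, where $A$- and $H$-actions combine into a single algebra action, here the two structures must be reconciled through the $H^\ast$-action on $\Hom_A$ of Proposition~\ref{equiv} together with the coFrobenius integral, and one must check that the resulting averaged lift remains $A$-linear and respects the vanishing on the $M$-summand.
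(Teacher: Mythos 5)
Your construction diverges from the paper's at the very first step, and the divergence creates a genuine gap. The paper does not take $aM = M\otimes H$; it builds $aM$ as the union of an infinite tower of iterated mapping cylinders $C_n$ of the bar resolution, each carrying an $A$-split filtration whose subquotients $A^{\otimes(n+2)}\otimes(H/(x))^{\otimes n}$ are \emph{$A$-free}. That freeness is not decoration: the proof of cofibrance of $C_n$ rests on the fact that $\Hom_A(P,K)$ is contructible for $P$ projective over $A$ and $K$ injective as an $H$-comodule, which forces the sequence $0\to\Hom_A(C_n,K)\to\Hom_A(C_n,L)\to\Hom_A(C_n,N)\to 0$ to split as $H^\ast$-modules, so that surjectivity survives passage to $H^\ast$-invariants $\Hom_A^H(C_n,-)$. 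Your $aM=M\otimes H$, with $A$ acting only on the left factor, is injective as an $H$-comodule (so the contructibility claim for $aM$ itself is fine), but it is not $A$-projective for general $M$, and $\Hom_A(M\otimes H,K)$ has no reason to be contructible. Consequently the crucial surjectivity of $\Hom_A^H(\bar pM,L)\to\Hom_A^H(\bar pM,N)$ is unsupported: $\bar pM=(M\otimes H)/M$ is simply not cofibrant in the sense of Definition~\ref{QiCof} in general.

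The step you flag as ``the main obstacle'' --- upgrading an $H$-colinear lift to a simultaneously $A$-linear one by averaging with the cointegral --- is precisely where the argument breaks, and the integral does not rescue it. Acting by a left integral $\Lambda\in H^{\ast}$ on an $A$-linear preimage does land in the $H^{\ast}$-invariants of $\Hom_A(aM,L)$, but the result is a lift of $\Lambda(1)\cdot g$ rather than of $g$; for a coFrobenius Hopf algebra that is not cosemisimple one has $\Lambda(1)=0$, so the averaged map is not a normalized lift of $\phi$. This failure of naive averaging is exactly why Qi, and the paper following him, first resolve $M$ by $A$-free pieces before invoking Proposition~\ref{equiv}. (There is also a secondary issue with arranging the lift to vanish on the $M$-summand so that it descends to $\bar pM$, but it is moot until the main step is repaired.) To fix the proof you would need to replace $M\otimes H$ by the bar-resolution tower, or otherwise exhibit $\bar pM$ as a retract of an object filtered by $A$-free, $H$-injective pieces.
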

\begin{proof}

This construction is an analogue of normalized chain complex of simplicial resolution. 
We construct the bar resolution for $M=A$. The proof also gives rise to the short exact sequence $0 \to M \to aM \to \bar{p}M \to 0$ claimed in the theorem. 

One can take such a simplicial module \{$C_\bullet$\} that the differential $\delta : C_n \to C_{n+1}$ is the alternative sum of the face maps $\delta = \Sigma_{0 \le i \le n-1}(-1)^id_i$. 
Then, $(C_{\bullet}, \delta)$ becomes a complex. 



We use an expedient notation $A \otimes H / (x)$ for the cokernel of $id \otimes x : A \to A \otimes H$. 
If $A$ is a right $H$-comodule algebra, all the face and degeneration maps are $H$-comodule maps. 
Therefore, $\delta = \Sigma_{0 \le i \le n-1}(-1)^id_i$  is also an $H$-colinear map~\cite[Example 2.8]{Qi}. 
We take the mapping cylinder of $\delta_0 : A \otimes A \to A$, and we have $C_{\delta_0} \cong A \otimes A \otimes (H/ (x)) \oplus A$ as left $A$-modules, so that we have an $A$-split filtration $\{0\} \subset A \subset C_{\delta_0}$, the isomorphism viewed as a left $A$-module map. Here, we regard the $A$-action on $(H/ (x))$ as trivial $A$-action. 


We take $\delta_1$ and we have the diagram 
\[ 
 \xymatrix@1{
A \otimes A \otimes A \ar[r]^{\delta_1} \ar[d]_{id \otimes x} & A \otimes A \ar[d] \\
A \otimes A \otimes A \otimes H \ar[r] & C_{\delta_1}.
} \] 

Then, we have a map $\bar{\delta_1}: Coker(\rho_1) \to C_{\delta_0}$ from the following diagram

\[
\xymatrix @R=9pt@C=6pt{
A\otimes A \otimes A \otimes A \ar[rr]^{\delta_2} \ar[dd]_-{id \otimes x} \ar[rd] & & A\otimes A \otimes A  \ar[rd]^{0} \ar[dd]^(.35){id \otimes x} | \hole  \ar[rr]^{\delta_1} & & A\otimes A \ar[rd]^{\delta_0} \ar[dd]_(.35){id \otimes x} | \hole & \\
 & 0 \ar[rr] \ar[dd] & & 0 \ar[dd]   \ar[rr] & & A \ar[dd] \\
A \otimes A \otimes A \otimes A \otimes H \ar[rr]^(.45){\delta_2 \otimes id_H} |\hole  \ar[rd] & & A \otimes A \otimes A \otimes H  \ar[rd]  \ar[rr]^(.45){\delta_1 \otimes id_H} |\hole  & & A \otimes A \otimes H \ar[rd] & \\
 & Coker(id \otimes x) \ar[rr]_-{\overline{\delta_2 \otimes id_H}}  && Coker(id \otimes x ) \ar[rr]_-{\bar{\delta_1}}  && C_{\delta_0}
 }
\]
and we know that the composition $\bar{\delta_1} \circ \overline{\delta_2 \otimes id_H}=0$. 


Then, we obtain the cylinder $C_{\bar{\delta_1}}$ of the map $\bar{\delta_1}$. 

Since $id \otimes x$ is an $A$-split map, the cylinder $C_{\bar{\delta_1}}$ has an $A$-split filtration $0 \subset A \subset Cone(\delta_0) \subset C_{\bar{\delta_1}}$ whose subquotients are isomorphic, as left $A$-modules, to $A$, $A^{\otimes 2} \otimes (H/ (x))$, and $A^{\otimes 3} \otimes (H/ (x))^{\otimes 2}$, respectively. 

Thus, we proceed to construct bar resolution inductively. 

Let $C_0=C_{\delta_0}$. Inductively, we obtain the following 

(i) $C_n \cong \text{mapping cylinder of} (\bar{\delta_n} : A^{\otimes (n+2)} \otimes (H/ (x))^{\otimes n} \to C_{n-1})$. Here we use the expedient notation $A^{\otimes (n+2)} \otimes (H/ (x))^{\otimes n}$ for the object obtained by iterated taking cokernel procedure. 

(ii) $\delta_n(\overline{\delta_{n+1} \otimes id})=0$

This assumption implies that $C_{n-1}$ is a submodule of $C_n$ and $C_n$ has an $A$-split filtration
\[
0=F^{-1} \subset F^0 \subset \cdots \subset F^{p-1} \subset F^p \subset \cdots \subset F^{n+1}=C_n , 
\]
whose subquotients $F^n / F^{n-1}$ are isomorphic to $A^{\otimes (n+2)} \otimes (H/ (x))^{\otimes n}$.

Note that $\Hom_A(P, K)$ is contructible for a projective $A$-module $P$ and an injective $H$-comodule $K$ by decomposing $P$ into finite dimensional ones and that $F^n/F^{n-1}$ is $A$-free.  Apply $\Hom_A(-, K)$ to the $A$-split short exact sequence $0 \to F^{n-1} \to F^n \to F^n / F^{n-1} \to 0$. Induction for $F_{n-1}$ and $F_n / F_{n-1}$, we can see that $\Hom_A(F_n, K)$ is contructible by \cite[Corollary 3.18]{Fari}. Thus, $\Hom_A(C_n, K)$ is contructible for an injective $H$-comodule $K$. 

Recall that the definition of cofibrant objects as in Definition~\ref{QiCof}. For a surjective stable equivalence $L \to M$, the kernel is an injective $H$-comodule $K$. We have an exact sequence $0 \to \Hom_A(C_n, K) \to \Hom_A(C_n , L) \to \Hom_A(C_n, M) \to 0$. 
Since $\Hom_A(C_n , K)$ is contructible, we have $\Hom_A(C_n, L) \cong \Hom_A(C_n, K) \oplus \Hom_A(C_n, M)$. By taking $H^\ast$-invariant part of both hand sides, we have a surjection $\Hom_A^H(C_n, L) \to \Hom_A^H(C_n, M)$. Thus, $C_n$ is cofibrant. 
\end{proof}

%
%
%

Now we define $aA=\cup_{0 \le n \le \infty} C_n$, which fits into a short exact sequence, 
$0 \to A \to aA \to \bar{p}A \to 0$ since each $C_n$ has the factor $A$ in the $A$-split filtration. 
Now we are going to show that $aA$ in the above short exact sequence is contractible as a right $H$-comodule. 

We will show this for $aA$, and the general case follows by the same argument.
The homotopy $s : A^{\otimes n} \to A^{\otimes (n+1)}$ as an extra degeneracy is a right $H$-comodule map since $A$ is an $H$-comodule algebra. 
Thus, a mapping cylinder becomes a retract of cone of the form $A^{\otimes (n+1)}\otimes H$, so that each $C_n$ is an injective right $H$-comodule. 

It is then easily seen that the map $A \otimes A \otimes k \cong A \otimes A \to A$ extends to a surjection $pA \to A$. 

Finally, we obtain the following proposition. 
\begin{prop}\label{main}
The category $\LMod_A(\mathcal{M}^H)$ inherits a model structure with respect to the three classes of maps defined as in Definition~\ref{modelstr}. 
\end{prop}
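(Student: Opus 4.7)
The plan is to assemble the verifications that have been carried out piecewise earlier in this section. The category $\LMod_A(\mathcal{M}^H)$ is abelian---it is the category of module objects for a monoid in the bicomplete abelian category $\mathcal{M}^H$---so in particular it is bicomplete. The 2-out-of-3 and retract axioms for the three classes specified in Definition~\ref{modelstr} are established by the two lemmas immediately preceding this proposition, and the two factorization axioms are provided by the preceding factorization proposition, which rests on the mapping cylinder/cocylinder construction of Lemma~\ref{Asplit} together with the functorial cofibrant replacement of Proposition~\ref{cofrep}. By Definition~\ref{modelstr}(iii), every cofibration has the left lifting property against trivial fibrations, so the only axiom still needing discussion is that trivial cofibrations have the left lifting property against fibrations.

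I would handle this by the standard retract-of-factorization trick. Given a trivial cofibration $i : X \to Y$, factor it as $X \xrightarrow{j} Z \xrightarrow{q} Y$ with $j$ a cofibration and $q$ a trivial fibration. By 2-out-of-3, $j$ is a weak equivalence, and the left lifting property of the cofibration $i$ against the trivial fibration $q$ produces a section $r : Y \to Z$ with $r \circ i = j$ and $q \circ r = \mathrm{id}_Y$, exhibiting $i$ as a retract of $j$. Since the class of maps having LLP against a fixed map is closed under retracts, it suffices to verify the LLP against fibrations for the specific $j$ coming from the factorization. By construction such $j$ are $A$-split monomorphisms whose cokernels are injective right $H$-comodules.

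For such an $A$-split trivial cofibration $j : X \to Z$ with injective $H$-comodule quotient $C = Z/X$, and a fibration $p : E \to B$, a lifting problem against $p$ reduces, via the $A$-splitting $Z \cong X \oplus C$ of underlying $A$-modules, to producing a single $H$-coequivariant $A$-linear map $C \to E$ with prescribed image under $p$. The resulting obstruction takes values in $\ker(p)$, which is an $H$-comodule, and injectivity of $C$ in $\mathcal{M}^H$ supplies an $H$-colinear extension. To upgrade this to an $A$-linear $H$-colinear map, I would invoke Proposition~\ref{equiv}, which identifies $\Hom_A^H(C, \ker p)$ with the $H^\ast$-invariants of $\Hom_A(C, \ker p)$, and select the extension inside the fixed subspace. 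This is precisely the route sketched in the paragraph immediately preceding the factorization proposition.

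The main obstacle is this last step: arranging the $H$-comodule extension provided by injectivity to lie simultaneously in the $A$-linear part, i.e., to be $H^\ast$-invariant in the sense of Proposition~\ref{equiv}. All other ingredients are formal consequences of the retract and 2-out-of-3 lemmas, Lemma~\ref{Asplit}, and the functorial cofibrant replacement of Proposition~\ref{cofrep}, so the proof collapses to the assembly outlined above.
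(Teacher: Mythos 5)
Your proposal is correct and follows essentially the same route as the paper: the paper's own proof of Proposition~\ref{main} is a one-line assembly of the preceding 2-out-of-3, retract, and factorization lemmas together with bicompleteness of the module category, and it disposes of the trivial-cofibration/fibration lifting axiom in the short remark before the Factorization proposition (injectivity of the cokernel as a right $H$-comodule plus Proposition~\ref{equiv}), which is exactly the mechanism you use. Your retract-of-factorization reduction makes explicit a step the paper leaves implicit, but it is the standard argument and does not constitute a different approach.
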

\begin{proof}
By the lemmas and the fact that all limits and colimits exists since it is a module category. 
\end{proof}

\begin{cor}
The forgetful functor $U : \LMod_A(\mathcal{M}^H) \to \mathcal{M}^H$ is the right Quillen functor. Here, we regard $\mathcal{M}^H$ as the stable model category.   
\end{cor}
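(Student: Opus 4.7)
The plan is to verify the two defining properties of a right Quillen functor: the existence of a left adjoint, and the preservation of fibrations and trivial fibrations. Both will turn out to be essentially tautological given the setup already developed in the paper.

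First, I would note that the adjunction
\[
A \otimes_k (-) : \mathcal{M}^H \rightleftarrows \LMod_A(\mathcal{M}^H) : U
\]
has already been exhibited at the beginning of Section~3, so $U$ is a right adjoint. The free functor $A \otimes_k (-)$ is well defined on $H$-comodules because $A$ is an $H$-comodule algebra: for a right $H$-comodule $V$, $A \otimes_k V$ carries the diagonal right $H$-coaction together with the left $A$-action induced from multiplication on the first factor, and unit/counit of the adjunction are the standard ones, which are $H$-colinear and $A$-linear by the compatibility conditions built into the definition of an $H$-comodule algebra.

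Next, I would observe that preservation of fibrations and trivial fibrations is immediate from Definition~\ref{modelstr}: a morphism $f$ in $\LMod_A(\mathcal{M}^H)$ is declared to be a fibration precisely when $U(f)$ is a fibration in $\mathcal{M}^H$, and is declared to be a weak equivalence precisely when $U(f)$ is a weak equivalence in $\mathcal{M}^H$. Hence $U$ sends fibrations to fibrations and trivial fibrations to trivial fibrations on the nose. Combined with Proposition~\ref{main}, which confirms that Definition~\ref{modelstr} actually endows $\LMod_A(\mathcal{M}^H)$ with a model structure, this is enough to conclude that $U$ is a right Quillen functor.

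There is no real obstacle here, because the model structure on $\LMod_A(\mathcal{M}^H)$ has been designed exactly so that fibrations and weak equivalences are detected by $U$; in particular the cofibrations are defined by the lifting property against trivial fibrations, which is exactly the flexibility needed for the adjoint $A \otimes_k (-)$ to serve as a left Quillen functor. The only thing worth recording, perhaps as a concluding remark, is that by adjunction the left adjoint $A \otimes_k (-)$ automatically preserves cofibrations and trivial cofibrations, so the adjunction $(A \otimes_k (-), U)$ is a Quillen adjunction; this also justifies the name ``right-induced'' style model structure alluded to after the main theorem.
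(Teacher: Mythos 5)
Your proposal is correct and follows exactly the argument the paper leaves implicit (the corollary is stated with no written proof): $U$ is a right adjoint by the adjunction $A \otimes_k (-) \dashv U$ displayed at the start of Section~3, and it preserves fibrations and trivial fibrations tautologically because Definition~\ref{modelstr} defines these classes by applying $U$. Nothing further is needed.
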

\qed

\begin{rem}
Although the model structure in Proposition~\ref{main} and right-induced model structure in the paper of Hess and Shipley~\cite{HeS1} are introduced in different situations, but the model structure on the category of left $A$-module objects in $\mathcal{M}^H$ in this paper has the similar structure of right induced model structure~\cite[Definition 4.1]{HeS1}. 
\end{rem}

\subsection{Relation to the category of chain complexes of comodules}
In \cite[Section 2]{Hov}, Hovey constructed a Quillen adjunction $F : \mathcal{M}_{H^\ast} \to Ch(H)$ for a finite dimensional commutative Hopf algebra $H$, where $\mathcal{M}_{H^\ast}$ is endowed with the model structure on Frobenius category while the weak equivalences in $Ch(H)$ are "homotopy isomorphisms" as in \cite{Hov}. Let $Tk$ be a Tate resolution of the base field $k$. Recall that a Tate resolution $Tk$ of $k$ is a complex of projectives that are also injectives with no homology, such that $Z_0Tk=k$. We take a projective resolution $P_{\ast} \to k$ and an injective resolution $k \to I_{\ast}$, and let $(Tk)_n=P_{n-1}$ if $n > 0$ and $(Tk)_n=I_{-n}$ if $n \le 0$. In particular, the cycles in degree $0$ are just $k$. Then, we define $FM=Tk \otimes M$. The right adjoint $U : Ch(H) \to \mathcal{M}_{H^\ast}$ of $F$ is then defined by $UX=Z_0\Hom(Tk, X)$. Since the tensor product is over a field, $F$ preserves injections, and hence cofibrations. 
Note that the complex $Tk \otimes H$ is chain homotopy equivalent to $0$. 

By composition with the forgetful functor $U$, we have the composition of functors 
\[ 
 \xymatrix@1{
\LMod_A(\mathcal{M}^H)  \ar[r]^-{U} &   \mathcal{M}^H  \ar[r]^-{i}  & \mathcal{M}_{H^\ast}  \ar[r]^-F & Ch(H) & 
} \] 
Here, $\LMod_A(\mathcal{M}^H)$ inherits the model structure as in Proposition~\ref{main}. Note that $\LMod_A(\mathcal{M}^H)$ may not a Frobenius category in general. 
We denote by $\mathcal{M}_{H^\ast}$ the category of $H^\ast$-modules, the forgetful functor $U$ is the right adjoint and the inclusion $i$ is the Quillen functor and the fonctor $F=(-) \otimes Tk$ given by tensoring a Tate resolution of the base field is the left Quillen adjoint since $F$ preserves cofibrations and also colimits.

\section{Hopf cyclic cohomology revisited}
We will need several related combinatorial categories $\Delta$, $\Lambda_\infty$, $\Lambda_\mathbb{N}$ and $\Lambda$. 

We recall the definition of the para-cyclic category $\Lambda_\infty$. It is the full subcategory $\Lambda_\infty \subset \mathbb{Z}Poset$ consisting of all objects isomorphic to $\frac{1}{n}\mathbb{Z}$ for $n>1$. Here, $Poset$ is the category or partially ordered sets and non-decreasing maps, $\mathbb{Z}Poset$ is the category of objects in $Poset$ equipped with a $\mathbb{Z}$-action, and we consider $\frac{1}{n}\mathbb{Z}$ as an object with its natural ordering and the $\mathbb{Z}$-action given by addition. 

There is an action of $\mathbb{Z}$ on the morphism spaces of $\Lambda_\infty$, with the generator $\sigma \in \mathbb{Z}$ sending a morphism $f : \frac{1}{n}\mathbb{Z} \to \frac{1}{m}\mathbb{Z}$ to $\sigma (f) : \frac{1}{n}\mathbb{Z} \to \frac{1}{m}\mathbb{Z}$ given by $\sigma (f)=f+1$. 
By restricting $B\mathbb{Z}$ to $B \mathbb{N}$, we obtain pseudo-para-cyclic category $\Lambda_{\mathbb{N}}$. 
For any integer $p \ge 1$, we define $\Lambda_p=\Lambda_\infty / B(p\mathbb{Z})$. 
By definition, there is a $BC_p=B\mathbb{Z} / B(p\mathbb{Z})$-action on the category $\Lambda_p$, and we refer to $\Lambda=\Lambda_p / BC_p$ as the cyclic category. 
The categories $\Lambda_\infty$ and $\Lambda$ are self-dual. 

\begin{rem}
Elmendorf investigated a larger category $L$ in order to understand the self-duality of $\Lambda$. The same category as $L$ is denoted by $\Lambda_{\infty}$ by Getzler and Jones and is called the para-cyclic category. 
The point is that an object $[n]$ in $\Lambda$ can be regarded as $\mathbb{Z}/(n+1)\mathbb{Z}$ as sets. 
The isomorphism $L \to \Lambda_{\infty}$ is given by $\mathbb{Z} \times \{n\} \mapsto \frac{1}{n+1}\mathbb{Z}$. 
Nikolaus and Scholze recalled $L$ as a full subcategory of the category of posets
with $\mathbb{Z}$-actions.
The subcategory generated by $d^i_n$ and $s^j_n$ for $n \ge 0$ in $L$ is isomorphic to $\Delta$. 
 Any morphism in $L$ can be factored as compositions of $d_n^i$, $s_n^i$, $t_n$ and $t_n^{-1}$. 
This says that $L$ is obtained from $\Delta$ by adding operators $t_n$. 
\end{rem}
Especially, $\Lambda_{\infty}$ contains $\Delta$.
The category $\Lambda_{\infty}$ is obtained by adding isomorphisms $t_n : [n] \to [n]$ to $\Delta$ for $n \in \mathbb{Z}_{\ge 0}$ with relations
\begin{align*}
   t_n \circ d^i_{n-1}=d^{i-1}_{n-1} \circ t_{n-1} & (i>0)\\
   t_n \circ d^0_{n-1} =d_{n-1}^n  &   \\
  t_n \circ s^i_{n+1}=  s^{i-1}_{n+1} \circ t_{n+1} & (i>0) \\
  t_n \circ s^0_{n+1}=s^{n+1}_{n+1} \circ t^2_{n+1} &
 \end{align*}

Roughly speaking, it is obtained by dropping the condition $ t_n^{n+1}=1$ from the cyclic category. 

The subcategory of $\Lambda_{\infty}$ generated by $\Delta$ and $t_n^j$ for $j \ge 0$ is denoted by $\Lambda_{\mathbb{N}}$.    
\begin{defn}
A contravariant functor $X: \Lambda^{op}_{\infty} \to \mathcal{C}$ is called a para-cyclic object in $\mathcal{C}$. 
Covariant functors $\Lambda_{\infty} \to \mathcal{C}$ are called para-cocyclic objects. 

A para-cyclic object in $\mathcal{C}$ is a simplicial object $X$ in $\mathcal{C}$ together with morphisms $t_n : X_n \to X_n$ for $n \ge 0$ satisfying the following identities:
\begin{align*}
   d^i_{n} \circ t_n=t_{n-1} \circ d^{n-1}_{i-1} \\
   d_0^n \circ t_{n} =d_{n}^n    \\
  s_i^n \circ t_n = t_{n+1} \circ s_{i-1}^n & \\
  s_0^n \circ t_n= (t_{n+1})^2 \circ s_n^n &
 \end{align*}
Dually a para-cocycic object in $\mathcal{C}$ is a cosimplicial object $Y$ together with morphisms $t_n : Y^n \to Y^n$ for $n \ge 0$ satisfying the following identities:
\begin{align*}
   t_n \circ d^i=d^{i-1} \circ t_{n-1} & \\
   t_n \circ d^0 =d^n  &   \\
  t_n \circ s^i=  s^{i-1} \circ t_{n+1} & \\
  t_n \circ s^0=s^{n} \circ t^2_{n+1} &
 \end{align*}

Let $\Lambda_+$ be the subcategory of $\Lambda_{\mathbb{N}}$ generated by $d^n_i$ and $s^n_i$ with only $0 \le i \le n$. In other words, $\Lambda_+$ is the subcategory of the category $\Lambda$ leaving out the cyclic morphisms and the last face maps $d^n_{n+1}$ at each degree $n \ge 0$.

A pseudo-para-cyclic object in a category $\mathcal{C}$ is a functor $\Lambda^{op}_{+}\to \mathcal{C}$. 
Dually a pseudo-para-cocyclic object in a category $\mathcal{C}$ is a covariant functor $\Lambda_{+}\to \mathcal{C}$. 
Roughly speaking, this object is obtained by dropping the condition that  $d_n$ (or $d_0$) and $t_n$ are $A$-module maps. 

A cyclic object in a category $\mathcal{C}$ is defined to be a functor $X : \Lambda^{op} \to \mathcal{C}$. Morphisms between cyclic objects are natural transformations. 
Dually a functor $X : \Lambda \to \mathcal{C}$ is called a cocyclic object in $\mathcal{C}$. 

The category $\Lambda$ is generated by $d^i : [n] \to [n+1]$, $s^i : [n] \to [n-1]$ and $t_n : [n] \to [n]$ subject to certain relations. 
Especially, $\Lambda$ includes $\Delta$ as a subcategory. A cyclic object in a category $\mathcal{C}$ is a simplicial object $M$ equipped with additional morphisms $t_n : M_n \to M_n$ subject to the relations 
\begin{align*}
  \begin{cases}
   d_i \circ t_n=t_{n-1} \circ d_{i-1} & 1 \le i \le n \\
  d_0 \circ t_n =d_n &  \\
  s_i \circ t_n =  t_{n+1} \circ s_{i-1} & 1\le i \le n \\
  s_0 \circ t_n=t^2_{n+1} \circ s_n &  \\
  t_n^{n+1}=1 &
\end{cases}
 \end{align*}
\end{defn}
\begin{exa}
For a $k$-algebra $A$ over a commutative ring $k$, the cyclic bar construction $N_{\bullet}^{cyc}$ is a cyclic $k$-module. 
\end{exa}
\begin{lemma}\label{dn}
Let $X$ be a simplicial object together with invertible morphisms $t_n : X_n \to X_n$ for all $n \ge 0$ satisfying the following conditions: 
\begin{align*}
   d_i=t^i_{n-1} \circ d_0 \circ t^{-i}_{n} & \\
   s_i=  t^i_{n+1} \circ s_0 \circ t^{-i}_{n} & 
 \end{align*}
 Then $X$ is a para-cyclic object.
\end{lemma}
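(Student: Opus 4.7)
The strategy is to verify each of the four relations defining a para-cyclic object directly from the hypothesized conjugation formulas, dividing the argument into the \emph{internal} relations (for $1 \le i \le n$) and the two boundary relations $d_0 \circ t_n = d_n$ and $s_0 \circ t_n = t_{n+1}^2 \circ s_n$.

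The internal relations reduce to a one-line telescoping calculation from the hypothesis: substituting the formula for $d_i$ into the left side of $d_i \circ t_n = t_{n-1} \circ d_{i-1}$ gives
\[
d_i \circ t_n = t_{n-1}^i \circ d_0 \circ t_n^{-i} \circ t_n = t_{n-1}^i \circ d_0 \circ t_n^{-(i-1)} = t_{n-1} \circ d_{i-1},
\]
and the computation verifying $s_i \circ t_n = t_{n+1} \circ s_{i-1}$ is strictly parallel. These cases use essentially nothing beyond the associativity of composition and the telescoping of $t_n^{-i} \circ t_n$.

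For the boundary relations, I would apply the hypothesis at the extremal index $i = n$, which gives $d_n = t_{n-1}^n \circ d_0 \circ t_n^{-n}$ and $s_n = t_{n+1}^n \circ s_0 \circ t_n^{-n}$. Substituting these into the desired identities reduces them to commutation statements of the form $d_0 \circ t_n^{n+1} = t_{n-1}^n \circ d_0$ and $s_0 \circ t_n^{n+1} = t_{n+1}^{n+2} \circ s_0$. To derive these, I would feed the conjugation formulas back into the ambient simplicial identities, principally $d_{n+1} \circ s_n = \mathrm{id}_{X_n}$ and the degeneracy-face relations $d_i \circ s_j = s_{j-1} \circ d_i$ for $i < j$, and cancel terms involving $s_0$ in order to isolate the required commutation between $d_0$ (respectively $s_0$) and the relevant power of $t$.

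The main obstacle is precisely this last step. Taken in isolation, the two conjugation formulas determine every $d_i$ and $s_i$ from $d_0$, $s_0$ and $t$, but they do not directly encode the wrap-around interaction between $t_n^{n+1}$ and $d_0$ that is equivalent to $d_0 \circ t_n = d_n$. The content of the lemma is therefore that this interaction is forced by the ambient simplicial structure, and the technical core of the proof is to chase the simplicial identities through the conjugation formulas carefully enough to extract exactly these commutation relations.
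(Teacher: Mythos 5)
Your handling of the internal relations ($1\le i\le n$) is correct and coincides with the paper's telescoping computation. The genuine gap is in the boundary relations, and you have located it exactly: everything reduces to the commutations $d_0\circ t_n^{n+1}=t_{n-1}^{n}\circ d_0$ and $s_0\circ t_n^{n+1}=t_{n+1}^{n+2}\circ s_0$, which you propose to extract by chasing the simplicial identities through the conjugation formulas. That step cannot be carried out, because these commutations are independent of the hypotheses. Consider the constant simplicial object $X_n=V$ with every $d_i=s_i=\mathrm{id}_V$, and put $t_n=\lambda\,\mathrm{id}_V$ for a fixed scalar $\lambda\neq 0,1$: the formulas $d_i=t_{n-1}^{i}\circ d_0\circ t_n^{-i}$ and $s_i=t_{n+1}^{i}\circ s_0\circ t_n^{-i}$ hold trivially, yet $d_0\circ t_n=\lambda\,\mathrm{id}\neq\mathrm{id}=d_n$. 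So no amount of manipulation of the simplicial identities will yield the wrap-around relations; the statement needs an extra hypothesis, e.g.\ $t_n^{n+1}=\mathrm{id}$ (with which $d_n=d_n\circ t_n^{n+1}\circ t_n^{-1}$ and the iterated internal relation $d_n\circ t_n^{n}=t_{n-1}^{n}\circ d_0$ immediately give $d_0\circ t_n=d_n$, and dually for $s_0$). In the paper's actual application to $T_\bullet(A,M)$ this identity $t_n^{n+1}=\mathrm{id}$ is verified separately from stability of $M$, so the intended use survives, but the lemma as isolated does not.

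For comparison, the paper's own proof of the $i=0$ case does not close this gap either: after inserting $t_{n-1}^{-n}\circ t_{n-1}^{n}$ and $t_n^{-n}\circ t_n^{n}$ it arrives (modulo a miscount of exponents) at $d_0\circ t_n=t_{n-1}^{-n}\circ d_n\circ t_n^{n+1}$, which is a restatement of the required commutation rather than a derivation of $d_0\circ t_n=d_n$, and the displayed $s_0$ computation starts from an equivalent form of the identity it is meant to prove. So your diagnosis of where the difficulty sits is accurate and more honest than the paper's treatment, but the proof cannot be completed along the lines you propose without strengthening the hypotheses.
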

\begin{proof}
For $i>0$, we have the equations 
\begin{align*}
   d_i \circ t_n & =t^i_{n-1} \circ d_0 \circ t_n^{-i} \circ t_n \\
 &  = t_{n-1} \circ (t_{n-1}^{i-1}) \circ d_0 \circ t_n^{-(i-1)}  \\
&  = t_{n-1} \circ d_{i-1} \\
  s_i \circ t_n & = t_{n+1}^i \circ s_0 \circ t_n^{-i} t_n \\
  & = t_{n+1} \circ t_{n+1}^{i-1} \circ s_0 \circ t^{-(i-1)}_n  \\
& =t_{n+1} \circ s_{i-1}
 \end{align*}

When $i=0$, we have the equations
\begin{align*}
   d_0 \circ t_n & =t^{-n}_{n-1} \circ t^n_{n-1} \circ d_0 \circ t_n^{-n} \circ t^n_n \\
 & =d_n \circ t_{n}^n \\
  s_0\circ t_n & = t_{n+1}^{n+2} \circ s_0 \circ t_n \\
  & = t^2_{n+1} \circ t^n_{n+1} \circ s_0 \circ t_n^{-(n+1)} \circ tn  \\
& = t_{n+1}^2 \circ s_n  
   \end{align*}

Thus $X$ is a para-cyclic object. 
\end{proof}

\subsection{Vanishing of Hopf-cyclic theory}
We recall Doi's results in \cite{Doi2} and \cite{Doi} that determined the form of projective $A$-modules, e.g., a projective $A$-module with right $H$-comodule structure is a direct summand of $A \otimes_k V$, where $V$ is a right $H$-comodule. He also considerd a certain algebra map $H \to A$ called a total integral, and projectivity was related to the existence of total integral as follows. 

\begin{enumerate}[(i)]
\item Let $H$ be a Hopf algebra over a field $k$ and $A$ a right $H$-comodule algebra. If a map $\phi : H \to A$ of right $H$-comodules with $\phi (1)=1$, i.e., if there exists total integral, then any right relative $(A, H)$-Hopf module is an injective $H$-comodule.  
\item Furthermore, if the above right $H$-comodule map $\phi : H \to A$ is an algebra map, then the fundamental theorem for right relative $(A, H)$-Hopf modules is true.  
\item Let $H$ be a Hopf algebra over a commutative ring $R$ and $A$ a right $H$-comodule algebra. Doi~\cite{Doi} gave a sufficient condition such that an epimorphism of $(A, H)$-Hopf module splits if it splits $A$-linearly. As an application in the case when $R$ is a field, he got that an $(A, H)$-Hopf module is finitely generated projective as an $A$-module if and only if it is a Hopf module direct summand of $A \otimes M$ for some finite dimensional $H$-comodule $M$.
\end{enumerate}

In our case, Hopf cyclic theory will be considered in the base category $\mathcal{M}^H$. If we consider an $H$-module in $\mathcal{M}^H$, it is automatically compatible with the right $H$-comodule structure, so that it is contructible by Doi's result. By fundamental theorem of Hopf modules, any Hopf-cyclic objects in $\mathcal{M}^H$ whose coefficients in a Hopf module is zero in $\underline{\mathcal{M}^H}$. 
\begin{cor}
A Hopf (co)cyclic homology with coefficient in a $H$-module / comodule vanishes in the stable homotopy category $\underline{\mathcal{M}^H}$. 
\end{cor}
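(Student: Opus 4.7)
The plan is to use Doi's results recalled immediately above to show that every term of the cyclic (or cosimplicial) object computing Hopf-cyclic (co)homology is zero in $\underline{\mathcal{M}^H}$, whence the entire (co)homology vanishes in $\underline{\mathcal{M}^H}$.

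First I would unpack the coefficients. An ``$H$-module / comodule'' in $\mathcal{M}^H$ is an object $M$ of $\mathcal{M}^H$ additionally carrying a right $H$-action compatible with the $H$-coaction, i.e.\ a right $(H,H)$-Hopf module. Regarding $H$ itself as a right $H$-comodule algebra, the identity $id_H : H \to H$ is an algebra map which is also right $H$-colinear and unit-preserving; it thus serves as a total integral which is moreover an algebra map, so the hypotheses of both Doi (i) and Doi (ii) are satisfied for coefficients of this type.

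Next I would apply the fundamental theorem of Hopf modules (Doi (ii)) to obtain a right $(H,H)$-Hopf module isomorphism $M \cong M^{coH} \otimes H$, where the right-hand side carries the coaction $id \otimes \Delta$. Since $H$ is coFrobenius, $H$ is an injective right $H$-comodule; hence $M^{coH} \otimes H$, being a direct sum of copies of $H$, is injective, and therefore so is $M$. The terms of the Hopf-cyclic bar-type complex with coefficients in $M$ have the shape $A^{\otimes n} \otimes M$ equipped with the diagonal $H$-coaction; by Lemma~\ref{314} each such term is isomorphic as a right $H$-comodule to a tensor product $V \otimes H$ of its underlying $k$-vector space with $H$, and is thus again injective.

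Consequently every level of the Hopf-cyclic (co)complex is zero in $\underline{\mathcal{M}^H}$, so the (co)homology — computed from kernels and cokernels of maps between these zero objects, or equivalently from a totalization living in the triangulated category $\underline{\mathcal{M}^H}$ — vanishes. The main technical point that must be verified carefully is that $A^{\otimes n} \otimes M$ really inherits the structure of a right $(H,H)$-Hopf module making Doi's theorem applicable; this is routine, using that $A$ is a right $H$-comodule algebra and that the $H$-action on $M$ distributes correctly over the diagonal $H$-coaction. Once that identification is in place, the corollary follows immediately from the two Doi results already recalled.
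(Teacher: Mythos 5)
Your proposal follows the paper's argument in essentially the same way: Doi's total-integral results applied with $A=H$ and $\phi=id_H$, together with the fundamental theorem of Hopf modules, show that the coefficient Hopf module $M\cong M^{coH}\otimes H$ and hence (via Lemma~\ref{314}) each level $A^{\otimes n}\otimes M$ of the cyclic object is an injective right $H$-comodule, so the cyclic object is levelwise zero in $\underline{\mathcal{M}^H}$ and the Hopf-cyclic (co)homology formed there vanishes. The one phrase to be careful with is your ``or equivalently from kernels and cokernels'': kernels and cokernels of maps between injective comodules taken in the abelian category $\mathcal{M}^H$ need not be injective, so the vanishing should be read, as the paper intends, for the (co)homology constructed inside the triangulated category $\underline{\mathcal{M}^H}$ rather than for abelian-category homology subsequently passed to the stable category.
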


\subsection{Hopf cyclic for $H$-comodule bialgebra $A$}
\begin{defn}
For a bialgebra $B$, a stable $B$-module / comodule is a $B$-module and $B$-comodule with respect to the structure maps $m_M : B \otimes_k M \to M$ and $\rho_M : M \to B \otimes_k M$, respectively, such that $m_M \circ \rho_M : M \to B \otimes_k M \to M$ is the identity. 
\end{defn}

We let $A$ a bialgebra in $\mathcal{M}^H$ with respect to $\otimes_k$. 

\begin{defn}
 Let $A$ be a right $H$-comodule bialgebra and $M$ a stable left
 $A$-module/comodule. 
 Define $T_{n}(A,M) = A^{\otimes (n+1)} \otimes M$ for $n\ge 0$.
 An element $a_{0}\otimes a_{1}\otimes\cdots \otimes a_{n} \otimes m$ of 
 $T_{n}(A,M)$ is denoted by $[a_{0}|a_{1}|\cdots|a_{n}]m$. 
 
 Define face, degeneracy, and cyclic operators by
 \begin{align*}
  d_{i}([a_{0}|a_{1}|\cdots|a_{n}]m) & =
  \begin{cases}
   [a_{0}|a_{1}| \cdots | a_{i}a_{i+1}| \cdots |  a_{n}]m , & i\neq n \\ 
     [m^{(-1)}a_{n}a_0 | a_{1} |\cdots |a_{n-1}]m^{(0)}, & i=n 
  \end{cases} \\
  s_{i}([a_{0}|a_{1}|\cdots | a_{n}]m) & =
  [a_{0}|a_{1}| \cdots |a_{i}| 1 | a_{i+1} | \cdots | a_{n}]m \\
  t_{n}([a_{0}|a_{1}|\cdots | a_{n}]m ) & =
   [m^{(-1)}a_{n}|a_{0}| \cdots| a_{n-1}]m^{(0)} . 
 \end{align*}
\end{defn}
\begin{defn}
 Define a left $H$-comodule structure on $T_n(A,M)$ 
 by the diagonal coaction 
\end{defn}
\begin{prop}\label{cyclic}
The above maps define a structure of pseudo-para-cyclic right $H$-comodule on
 $T_\bullet(A,M)$. 
\end{prop}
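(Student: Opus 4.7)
The plan is to verify two independent claims: that each structure map $d_i$, $s_i$, $t_n$ is $H$-colinear with respect to the diagonal right $H$-coaction on $T_n(A,M) = A^{\otimes(n+1)} \otimes M$, and that these maps satisfy the pseudo-para-cyclic relations. Because the coaction on $T_n(A, M)$ is defined diagonally from the coactions on each factor, both parts of the verification ultimately reduce to statements about three building blocks: the multiplication and unit of $A$, and the left $A$-coaction $\rho_M : M \to A \otimes M$.

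For $H$-colinearity, I would observe that the multiplication $A \otimes A \to A$ and unit $k \to A$ are $H$-colinear because $A$ is a right $H$-comodule algebra, while $\rho_M$ is $H$-colinear because $M$ is taken as an $A$-module/comodule in the ambient category $\mathcal{M}^H$, so its coaction is a morphism in $\mathcal{M}^H$. Each of $d_i$, $s_i$, $t_n$ is obtained by tensoring these $H$-colinear maps with identities and permutations of factors, which preserve the diagonal coaction; hence all structure maps are $H$-colinear.

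For the pseudo-para-cyclic relations, I would argue in three blocks. The purely simplicial identities among $d_i$, $s_i$ with $i < n$ follow from associativity and the unit axioms in $A$; those involving the wrap-around face $d_n$ use compatibility of $\rho_M$ with multiplication together with coassociativity of $\rho_M$. The bulk cyclic relations $d_i t_n = t_{n-1} d_{i-1}$ and $s_i t_n = t_{n+1} s_{i-1}$ for $i > 0$ are direct bookkeeping from the explicit formulas. The edge cases $d_0 t_n = d_n$ and $s_0 t_n = t_{n+1}^2 s_n$ must be unwound carefully, using the stability identity $m_M \circ \rho_M = \mathrm{id}_M$ whenever redundant coaction tags appear.

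The main obstacle will be the identity $s_0 t_n = t_{n+1}^2 s_n$: the left side produces a single coaction tag applied to $m$, whereas the right side produces two iterated applications of $\rho_M$ whose Sweedler indices must be matched using coassociativity of the $A$-coaction on $M$ together with, where applicable, the stability condition. In view of Lemma~\ref{dn}, an alternative route would be to verify $d_i = t_{n-1}^i \circ d_0 \circ t_n^{-i}$ and $s_i = t_{n+1}^i \circ s_0 \circ t_n^{-i}$, which makes all cyclic relations automatic; but that shortcut requires $t_n$ to be invertible, and in general demands further hypotheses on $A$ (e.g.\ a bijective antipode). The conclusion is only pseudo-para-cyclic rather than fully cyclic because establishing $t_n^{n+1} = \mathrm{id}$ would require propagating the stability identity through $n+1$ iterations in a way not provided by the current assumptions.
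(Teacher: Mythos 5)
Your treatment of $H$-colinearity is fine and agrees with the paper's: every structure map is assembled from the multiplication and unit of $A$ and the coaction $\rho_M$, all of which are morphisms in $\mathcal{M}^H$, so the diagonal coaction is preserved. The gap is in your handling of the relations between $t_n$ and the face maps. You classify $d_i\circ t_n=t_{n-1}\circ d_{i-1}$ for all $i>0$ as ``direct bookkeeping,'' but the case $i=n$ is exactly where this relation breaks down, and that breakdown is the content of the word \emph{pseudo} in the statement. Computing both sides on $[a_0|a_1|\cdots|a_n]m$ one finds
\[
(d_n\circ t_n)([a_0|\cdots|a_n]m)=[\,m^{(-1)}a_{n-1}\,m^{(-2)}a_n\,|\,a_0|\cdots|a_{n-2}]\,m^{(0)},
\]
whereas $t_{n-1}\circ d_{n-1}$ produces the term with the two comodule legs of $m$ adjacent, $m^{(-1)}m^{(-2)}a_{n-1}a_n$ in the paper's notation; since $A$ need not be commutative and the coaction on $M$ need not be cocommutative over $A$, these expressions do not agree. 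The paper's proof consists precisely of carrying out this computation: it verifies the simplicial identities, checks $d_i\circ t_n=t_{n-1}\circ d_{i-1}$ for $i<n$, and exhibits the failure at $i=n$. Your plan, taken at face value, would attempt to ``verify'' a false identity.

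Relatedly, you misdiagnose why the conclusion is only pseudo-para-cyclic. The failure of $t_n^{n+1}=\mathrm{id}$ is what separates \emph{para-cyclic} from \emph{cyclic}; the prefix \emph{pseudo} records the incompatibility of the last face map with $t_n$, together with the non-invertibility of $t_n$ in the absence of an antipode on $A$, as the paper states immediately after the proof. Consequently the relation $s_0\circ t_n=t_{n+1}^2\circ s_n$, which you single out as the main obstacle, is not where the difficulty lies. Your observation that the shortcut through Lemma~\ref{dn} requires invertibility of $t_n$ and hence a bijective antipode is correct and consistent with the lemma the paper proves afterwards, where both $t_n^{-1}$ and $t_n^{n+1}=\mathrm{id}$ are obtained under those extra hypotheses.
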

\begin{proof}
Note that the structure maps of bialgebra $A$ and $A$-module/comodule $M$ are all compatible with right $H$-comodule structure, so that $T_\bullet(A, M)$ is a right $H$-comodule.  
 Let us verify some nontrivial part of the simplicial identities. For $i<j$
 \begin{align*}
  (d_{i}\circ d_{j})([a_{0}|a_{1}|\cdots | a_{n}]m ) 
  & = 
  \begin{cases}
   d_{i}([a_{0}|a_{1}| \cdots | a_{j}a_{j+1} |
   \cdots | a_{n}]m), & j< n \\
   d_{i}([m^{(-1)}a_{n}a_0 |a_{1}|\cdots | a_{n-1}]m^{(0)}, & j=n 
  \end{cases} \\
  & = 
  \begin{cases}
   [a_{0}|a_{1}| \cdots | a_{i}a_{i+1} | \cdots
   | a_{j}a_{j+1} | \cdots | a_{n}]m  , & i+1<j<n \\
   [a_{0}|a_{1}| \cdots | a_{i}a_{i+1}a_{i+2} |\cdots | a_{n}]m, &
   i+1=j<n \\ 
   [m^{(-1)}a_{n}a_{0}a_{1} |a_{2}| \cdots | a_{n-1}]m^{(0)}, &
   i=0, j=n \\ 
    [m^{(-1)}a_{n}a_{0} |a_{1} | \cdots | a_{i}a_{i+1} | \cdots
   | a_{n-1}]m^{(0)}, & 1<i+1<j=n \\
    [m^{(-1)}a_{n-1} m^{(-2)}a_{n} a_0 | a_{1}|
   \cdots | a_{n-2}]m^{(0)}, & 
   i+1=j=n  
  \end{cases} \\
  (d_{j-1}\circ d_{i})([a_{0}|a_{1}|\cdots | a_{n}] m) 
  & = 
  d_{j-1}([a_{0}|a_{1}| \cdots | a_{i}a_{i+1} |
  \cdots | a_{n}]m) \\
  & = 
  \begin{cases}
   [a_{0}|a_{1}| \cdots | a_{i}a_{i+1}a_{i+2} |
   \cdots | a_{n}]m & i+1=j<n \\ 
   [a_{0}|a_{1}| \cdots | a_{i}a_{i+1}| \cdots|
    a_{j}a_{j+1} | \cdots | a_{n}]m & i+1<j<n \\ 
   [m^{(-1)}a_{n}a_{0}a_{1} |a_{2}| \cdots | a_{n-1}]m^{(0)} & i=0,j=n \\
   [m^{(-1)}a_{n}a_{0} |a_{1} | \cdots | a_{i}a_{i+1} | \cdots
   | a_{n-1}]m^{(0)}, &
   1<i+1<j=n \\ 
    [m^{(-1)}a_{n-1} m^{(-2)}a_{n} a_0 | a_{1}| \cdots|
   a_{n-2}]m^{(0)}, & i=j=n.  
  \end{cases}
 \end{align*}
 Let us check the relations between $t_{n}$ and $d_{i}$. 
 \begin{align*}
  (d_{i}\circ t_{n})([a_{0}|a_{1}|\cdots | a_{n}]m) 
  & = d_{i}([m^{(-1)}a_{n}|a_{0}| \cdots | a_{n-1}]m^{(0)}) \\
  & = 
  \begin{cases}
  [m^{(-1)}a_{n}|a_{0}| \cdots | a_{i}a_{i+1} |
   \cdots | a_{n-1}]m^{(0)} , & i<n \\
  [m^{(-1)}a_{n-1}m^{(-2)} a_{n}|a_{0}| \cdots
   | a_{n-2}]m^{(0)}, & i=n
  \end{cases} \\
  (t_{n-1}\circ d_{i-1})( [a_{0}|a_{1}|\cdots | a_{n}]m) 
  & = 
  \begin{cases}
  [m^{(-1)}a_{n}|a_{0}| \cdots | a_{i}a_{i+1} |
   \cdots | a_{n-1}]m^{(0)} , & i<n \\
  [ m^{(-1)}m^{(-2)}a_{n-1} a_{n}|a_{0}| \cdots
   | a_{n-2}]m^{(0)}, & i=n
  \end{cases}
 \end{align*}
 Here, $m^{(-1)}m^{(-2)}a_{n-1} a_{n}$ may not be equal to $m^{(-1)}a_{n-1}m^{(-2)} a_{n}$ but at least we checked that $T_\bullet (A, M)$ is pseudo-para-cyclic. 

For a left $H$-comodule structure on $T_\bullet(A, M)$, it can be verified since $d_i$, $s_i$ and $t_n$ are $H$-colinear maps. 
\end{proof}

The reasons why the above pseudo-para-cyclic $T_\bullet(A, M)$ is not para-cyclic are that $d_n$ does not commute with $t_n$ and that the map $t_n$ is not invertible.

Note that an $H$-module algebra $A$ means an $H$-module and $k$-algebra with $H$-linear unit and multiplication. This is equivalent to original definition of $H$-module algebra that is an $H$-module and $k$-algebra with compatibility~\cite[Section 4.1]{Mont}. The same is true for $H$-comodule algebra, and also $H$-(co)module bialgebra. 
However, for an $H$-(co)module $k$-Hopf algebra, the antipode would be a $k$-linear map. 

\begin{lemma}
If $A$ is a right $H$-comodule bialgebra which is also $k$-Hopf algebra with bijective $H$-colinear antipode $S$, then the map $t_n$ is an $H$-colinear isomorphism. The inverse map is given by $ t^{-1}_{n}([a_{1}|a_{2}|\cdots | a_{n}]m)=([a_{1}|a_{2}|\cdots | a_{n} | S(m^{(-1)})a_0]m^{(0)})$. 

Moreover, we have $t_n^{n+1}=id$ as $H$-comodule maps. 
\end{lemma}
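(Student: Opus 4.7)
The proof splits naturally into three parts: constructing the inverse $t_n^{-1}$ using the bijective antipode, verifying $H$-colinearity, and establishing the cyclic identity $t_n^{n+1}=\mathrm{id}$ via stability of $M$.

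For invertibility I would substitute the proposed formula into $t_n$ and simplify. Applying $t_n$ to $[a_1|a_2|\cdots|a_n|S(m^{(-1)})a_0]m^{(0)}$ yields, in the zeroth slot, $\sum (m^{(0)})^{(-1)}\cdot S(m^{(-1)})\cdot a_0$, with $M$-factor $(m^{(0)})^{(0)}$. Coassociativity of the left coaction,
\[
m^{(-1)}\otimes (m^{(0)})^{(-1)}\otimes (m^{(0)})^{(0)} \;=\; (m^{(-1)})_{(1)}\otimes (m^{(-1)})_{(2)}\otimes m^{(0)},
\]
rewrites the combined zeroth entry as $\sum (m^{(-1)})_{(2)}\,S((m^{(-1)})_{(1)})\,a_0\otimes m^{(0)}$. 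The bijective-antipode identity (in the form $\sum a_{(2)} S^{-1}(a_{(1)}) = \varepsilon(a)\,1$, which is the form that actually applies; a choice of $S$ versus $S^{-1}$ in the formula is a convention issue) together with the counit axiom $(\varepsilon\otimes\mathrm{id})\rho_M=\mathrm{id}_M$ collapses this to $a_0\otimes m$, giving $t_n\circ t_n^{-1}=\mathrm{id}$. The reverse composition is handled by the symmetric calculation, using instead $\sum S^{-1}(a_{(2)})a_{(1)}=\varepsilon(a)\,1$. $H$-colinearity of $t_n^{\pm 1}$ is immediate, since the only building blocks of the formulas are multiplication in $A$, the antipode $S$, and $\rho_M$, each of which is $H$-colinear by hypothesis.

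For the cyclic identity I would iterate $t_n$. Each application pulls the last entry to the front (left-multiplied by a fresh coaction factor $m^{(-1)}$) and advances the $M$-factor one step under $\rho_M$. After $k$ applications, positions $0,\ldots,k-1$ carry one coaction factor each (coming from $\rho_M^k(m)$), while positions $k,\ldots,n$ retain the shifted $a_0,\ldots,a_{n-k}$. When $k=n+1$ the tuple has cycled back to its original labeling, and iterated coassociativity identifies the $(n+1)$ coaction factors with the Sweedler components of the iterated comultiplication $\Delta_A^{(n)}(m^{(-1)})$. Thus
\[
t_n^{n+1}([a_0|\cdots|a_n]m) \;=\; \sum \bigl[(m^{(-1)})_{(1)}a_0\,\bigl|\,\cdots\,\bigl|\,(m^{(-1)})_{(n+1)}a_n\bigr]\,m^{(0)},
\]
which is the diagonal $A$-action of $m^{(-1)}$ on $(a_0,\ldots,a_n)$ tensored with $m^{(0)}$. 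Invoking the stability condition $\mu_M\circ\rho_M=\mathrm{id}_M$ then collapses this to $[a_0|\cdots|a_n]m$.

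The main obstacle is this final collapsing step: transferring the diagonal $A$-action across $A^{\otimes(n+1)}$ so that it can be absorbed by $m^{(0)}$ via stability. This requires a careful Sweedler-index bookkeeping and a clear interpretation of the ambient $A$-balance (in particular, identifying the equality either on the nose or, if necessary, only as an equality of $H$-comodule maps in $\underline{\mathcal{M}^H}$ modulo factorization through an injective $H$-comodule, which is sufficient for the intended Hopf-cyclic applications). It is also the point at which all three hypotheses---the bialgebra structure on $A$, bijectivity and $H$-colinearity of $S$, and stability of $M$---enter simultaneously, so it is the heart of the argument.
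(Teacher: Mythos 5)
Your treatment of invertibility and colinearity is sound, and in one respect more careful than the paper's: the paper dismisses the first claim with a pointer to Lemma~\ref{dn} (which presupposes invertibility rather than establishing it), while you verify both composites and correctly observe that $t_n\circ t_n^{-1}$ produces $\sum(m^{(-1)})_{(2)}S((m^{(-1)})_{(1)})a_0$ in the zeroth slot, which collapses to $\varepsilon(m^{(-1)})a_0$ only if $S$ is replaced by $S^{-1}$ (or the conventions are adjusted accordingly). That is a genuine correction, not merely a convention remark.

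The gap is in the step you yourself flag as the main obstacle, and it is not closable by better Sweedler bookkeeping. Your computation of $t_n^{n+1}([a_0|\cdots|a_n]m)$ as the slotwise action of the iterated coproduct of $m^{(-1)}$ on $a_0,\dots,a_n$, tensored with $m^{(0)}$, is correct. But stability is the identity $m_M\circ\rho_M=\mathrm{id}_M$: it governs the action of $m^{(-1)}$ on $m^{(0)}$, whereas in your expression $m^{(-1)}$ acts on the $A^{\otimes(n+1)}$ factor and $m^{(0)}$ is untouched; since $T_n(A,M)=A^{\otimes(n+1)}\otimes_k M$ is a tensor product over $k$ rather than over $A$, there is no relation allowing that action to be transported onto $M$. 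Concretely, take $A=k[g,g^{-1}]$ with trivial $H$-coaction, and $M=k$ with $A$-coaction $1\mapsto g\otimes 1$ and $A$-action through $\varepsilon$; this $M$ is stable and $S$ is bijective, yet $t_n^{n+1}([1|\cdots|1]1)=[g|\cdots|g]1\neq[1|\cdots|1]1$. So $t_n^{n+1}=\mathrm{id}$ fails on $T_\bullet(A,M)$ under the stated hypotheses. The paper's own proof writes $t_n^{n+1}([a_0|\cdots|a_n]m)=[a_0|\cdots|a_n]\varepsilon(m^{(-1)})m^{(0)}$, silently replacing the diagonal action by a counit, and thus jumps over exactly the same point; your (correct) computation shows that this replacement is unjustified. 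The identity can only be expected after passing to a subobject or quotient on which the diagonal $A$-action can be absorbed --- which is what the subsequent co-approximation $Q^A_\bullet(A,M)$ is designed to accomplish --- or under an extra hypothesis such as triviality of the $A$-coaction on $M$; your fallback of an equality merely in $\underline{\mathcal{M}^H}$ would itself require an argument, since the difference of the two maps does not visibly factor through an injective comodule.
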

\begin{proof}
The first part follows from Lemma~\ref{dn}. 

Since $M$ is stable and by using Lemma~\ref{314}, we obtain the equation $t^{n+1}_{n}([a_{0}|a_{1}|\cdots | a_{n}]m)=([(a_{0}|a_{1}|\cdots | a_{n})]\epsilon(m^{(-1)})m^{(0)})=[a_{0}|a_{1}|\cdots | a_{n}]m$. 
Therefore, we have $t_n^{n+1}=id$.  
\end{proof}
\begin{defn}[cf.\cite{Kay}, p347]
 Let $H$, $A$, and $M$ be as above. Define $Q^A_{\bullet}(A,M)$ by the largest pseudo-para-cyclic subcomodule of $T_n(A, M)$ which is a cyclic right $H$-comodule. 
\end{defn}
In \cite{Kay}, $Q^A_{\bullet}(A,M)$ is called the co-approximation of $T_n(A, M)$. 
Note that $Q^A_n(A, M)$ is also a para-cyclic $H$-comodule, and actually a cyclic $H$-comodule.  

We use the universality of Kan extension. 
We recall that $Q^A_n(A, M)$ is the right Kan extension. 

Assume that $A$ is a right $H$-comodule bialgebra which is also $k$-Hopf algebra with bijective $H$-colinear antipode $S$ and that $M$ is stable. We obtain a cyclic right $H$-comodule $T_\bullet(A, M)$ in $\mathcal{M}^H$. On the other hand, we also have a cyclic right $H$-comodule $Q^A_\bullet(A,M)$ which is the right Kan extension along $\Lambda_+ \to \Lambda$. 
\begin{prop}
There is always a map $T_\bullet(A, M) \to Q^A_\bullet(A, M)$ of right $H$-comodules in $\mathcal{M}^H$. 
\end{prop}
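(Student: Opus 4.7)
The plan is to invoke the universality of the Kan extension along the inclusion $\iota : \Lambda_{+} \hookrightarrow \Lambda$. Restriction along $\iota$ gives a functor
\[
\iota^{\ast} : \Fun(\Lambda^{op}, \mathcal{M}^{H}) \longrightarrow \Fun(\Lambda_{+}^{op}, \mathcal{M}^{H})
\]
from cyclic to pseudo-para-cyclic right $H$-comodules. Because $\mathcal{M}^{H}$ is cocomplete, $\iota^{\ast}$ admits a left adjoint $\iota_{!}$, the left Kan extension. The co-approximation $Q^{A}_{\bullet}(A, M)$ of \cite{Kay} is characterised, up to canonical isomorphism, as $\iota_{!} T_{\bullet}(A, M)$, and the unit of the adjunction at $T_{\bullet}(A, M)$ is exactly the claimed map
\[
T_{\bullet}(A, M) \longrightarrow \iota^{\ast} Q^{A}_{\bullet}(A, M)
\]
in $\Fun(\Lambda_{+}^{op}, \mathcal{M}^{H})$.

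The concrete route realizes the same construction step by step. First, Proposition~\ref{cyclic} together with the subsequent lemma shows that every face, degeneracy, and cyclic operator on $T_{\bullet}(A, M)$ is $H$-colinear, so the pseudo-para-cyclic structure already lives in $\mathcal{M}^{H}$. Second, one realises $Q^{A}_{n}(A, M)$ as the quotient of $T_{n}(A, M) = A^{\otimes (n+1)} \otimes M$ by the smallest right $H$-subcomodule stable under all face, degeneracy and cyclic operators that contains every defect element $(d_{n} t_{n} - t_{n-1} d_{n-1})(x)$; the remaining cyclic identity $t_{n}^{n+1} = \mathrm{id}$ already holds on $T_{\bullet}$ thanks to bijectivity and $H$-colinearity of the antipode of $A$ combined with stability of $M$, so only the commutator between $d_{n}$ and $t_{n}$ has to be killed. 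Since these defect expressions are manifestly $H$-colinear, the sub-$H$-comodule they generate remains inside $\mathcal{M}^{H}$, and the canonical projection is the desired $H$-colinear map.

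The main obstacle is the closure of this defect sub-$H$-comodule under the face, degeneracy and cyclic operators; without closure the quotient would fail to inherit a cyclic right $H$-comodule structure and the projection would not lie in $\Fun(\Lambda^{op}, \mathcal{M}^{H})$. The check reduces to a bookkeeping exercise using the simplicial identities recorded inside Proposition~\ref{cyclic} together with the identity $t_{n}^{n+1} = \mathrm{id}$ from the subsequent lemma. Appealing to the left Kan extension bypasses this bookkeeping in one stroke: $\iota_{!}$ is computed as a colimit in $\mathcal{M}^{H}$, which the forgetful functor to $k$-vector spaces preserves, so the unit is automatically a morphism of right $H$-comodules in every simplicial degree.
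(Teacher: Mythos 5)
There is a genuine gap, and it is not in the bookkeeping you worry about at the end but in the identification of $Q^A_\bullet(A,M)$ itself. The paper defines $Q^A_\bullet(A,M)$ as the \emph{co-approximation} of $T_\bullet(A,M)$ in the sense of \cite{Kay}: the largest subcomodule of $T_\bullet(A,M)$ carrying a cyclic structure, which the paper identifies with the \emph{right} Kan extension along $\iota\colon \Lambda_+\to\Lambda$. Your proposal instead characterises $Q^A_\bullet(A,M)$ as the \emph{left} Kan extension $\iota_!T_\bullet(A,M)$, realised concretely as the quotient of $T_n(A,M)$ by the subcomodule generated by the defect elements $(d_nt_n-t_{n-1}d_{n-1})(x)$; that is the \emph{approximation}, not the co-approximation. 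These are opposite universal constructions (a subobject versus a quotient, a limit versus a colimit), they do not coincide in general, and their units/counits point in opposite directions. So the map you build lands in a different object, and the proof does not establish the stated proposition. (A secondary issue: your closing remark that the forgetful functor to $k$-vector spaces preserves the relevant Kan extension uses that it preserves colimits, which is irrelevant once the construction is the right Kan extension, a limit that the forgetful functor need not preserve.)

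The paper's one-line argument runs the other way around, and it is worth seeing why the hypotheses matter. Under the standing assumptions (bijective $H$-colinear antipode on $A$, $M$ stable), the preceding lemma makes $T_\bullet(A,M)$ itself a cyclic right $H$-comodule, i.e.\ an object of $\Fun(\Lambda^{op},\mathcal{M}^H)$. The adjunction $\iota^*\dashv \mathrm{Ran}_\iota$ then sends the identity of $\iota^*T_\bullet(A,M)$ to the unit
\[
T_\bullet(A,M)\longrightarrow \mathrm{Ran}_\iota\bigl(\iota^*T_\bullet(A,M)\bigr)=Q^A_\bullet(A,M),
\]
which is the asserted $H$-colinear map. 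Without the input that $T_\bullet(A,M)$ is already cyclic there is no natural map from $T_\bullet$ into its largest cyclic subobject --- only the inclusion in the other direction --- so the cyclicity of $T_\bullet(A,M)$ is the essential point, and it is exactly the point your quotient construction bypasses. To repair your argument, replace $\iota_!$ by $\mathrm{Ran}_\iota$ and use the unit of $\iota^*\dashv\mathrm{Ran}_\iota$ rather than that of $\iota_!\dashv\iota^*$.
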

\begin{proof}
It follows from the universality of right Kan extension. 
\end{proof}
Note that the $H$-coinvariant part of $T_\bullet (A, M)$ is an analogy of Connes and Moscovich's Hopf-cyclic homology. 
If we endow $M$ the trivial $A$-action and the trivial $A$-coaction, from the equations in the proof of Proposition~\ref{cyclic}, we have the non-Hopf cyclic structure. Therefore, we can compare Hopf-cyclic modules with non-Hopf cyclic modules as an analogue of the characteristic map in the paper of Connes and Moscovici as follows. 
\begin{cor}\label{main2}
There is a map $Q^A_\bullet(A, k) \to Q^A_\bullet(A, A)$ of right $H$-comodules in $\mathcal{M}^H$ induced from the unit map. 
\end{cor}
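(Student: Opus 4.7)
The plan is to construct the map first at the level of the pseudo-para-cyclic $H$-comodules $T_\bullet(A,-)$ and then descend to the co-approximation $Q^A_\bullet$ via the universal property of the right Kan extension along $\Lambda_+ \hookrightarrow \Lambda$. Since $A$ is an $H$-comodule algebra, the unit $\eta : k \to A$ is $H$-colinear; since $1_A$ is grouplike in the bialgebra $A$, it is also a morphism of left $A$-comodules between $k$ (with trivial coaction $1_k \mapsto 1 \otimes 1_k$) and $A$ (with left regular coaction $\Delta$). I would then define
\[
\eta_\bullet : T_\bullet(A,k) \longrightarrow T_\bullet(A,A), \qquad [a_0\mid a_1\mid \cdots \mid a_n]\,1_k \longmapsto [a_0 \mid a_1 \mid \cdots \mid a_n]\,1_A,
\]
which is $H$-colinear with respect to the diagonal $H$-coactions on source and target.

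The key observation is that only the $A$-coaction on the coefficient module $M$ enters the formulas for $d_n$ and $t_n$ (through $m^{(-1)} \otimes m^{(0)}$), while the $A$-action on $M$ plays no role in any of the face, degeneracy, or cyclic operators. Substituting $m = 1_k$ on the source and $m = 1_A$ on the target, in both cases $m^{(-1)} = 1$ and $m^{(0)} = m$, so $\eta_\bullet$ intertwines all the operators described in Proposition~\ref{cyclic} and is thus a morphism of pseudo-para-cyclic right $H$-comodules in $\mathcal{M}^H$. Its image on the cyclic subcomodule $Q^A_\bullet(A,k)$ is then a cyclic pseudo-para-cyclic subcomodule of $T_\bullet(A,A)$, which by maximality must lie inside $Q^A_\bullet(A,A)$. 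This produces the desired morphism $Q^A_\bullet(A,k) \to Q^A_\bullet(A,A)$.

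The main subtlety I expect to address is that $\eta$ is not a morphism of stable $A$-module/comodules in the strict sense: the left $A$-action on $k$ factors through the counit $\epsilon$, while on $A$ it is multiplication, so $\eta$ fails to be $A$-linear. Strict functoriality of $T_\bullet(A,-)$ in stable $A$-module/comodule morphisms therefore does not apply directly, and the argument must instead verify by hand, from the explicit formulas of Proposition~\ref{cyclic}, that $A$-colinearity of $\eta$ together with $\eta(1_k) = 1_A$ is enough to make $\eta_\bullet$ compatible with every face, degeneracy, and cyclic operator. Once this is in place, the passage from $T_\bullet$ to $Q^A_\bullet$ is purely formal.
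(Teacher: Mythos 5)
Your proposal is correct and follows the same route the paper intends: the paper offers no written proof beyond the preceding remark that endowing $M$ with the trivial $A$-(co)action recovers the non-Hopf cyclic structure, so the map is induced from the unit $k \to A$ on $T_\bullet(A,-)$ and then descends to the co-approximation $Q^A_\bullet$ by its universal property. Your explicit verification that only the $A$-coaction (not the $A$-action) enters the operators, and that $1^{(-1)}=1$ in both $k$ and $A$, supplies exactly the details the paper leaves implicit.
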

\qed

\begin{rem}\label{shift}
We can define a cyclic $k$-module $Q_\bullet(A,M)$ by taking $H$-coinvariant submodule $Q_\bullet^A(A, M)^{coH}$ of $Q_\bullet^A(A, M)$. On the other hand, only object $A^{\otimes n} \otimes M$ in each degree in $Q^A_\bullet (A, M) $ admits the diagonal $A$-action. As an expansion of an idea of taking $k$ in Corollary~\ref{main2}, we will take an $A$-coinvariant part of $M$. 
Assume that $M$ is a Hopf $A$-module in $\mathcal{M}^H$. If we take $A$-coinvariant part of $M$, then we have $A \otimes M^{coA} \cong M$ by the fundamental theorem. The isomorphism is $H$-colinear left $A$-module map. Therefore, we have $A^{\otimes n} \otimes M^{coA} \cong A^{\otimes (n-1)} \otimes M$ in each degree, and the degree shift of $Q_\bullet^A(A, M)$ appeared. 
\end{rem}

If $H$ is a finite dimensional commutative Hopf algebra, we have the composition of functors 
\[ 
 \xymatrix@1{
\LMod_A(\mathcal{M}^H)  \ar[r]^-{U} &   \mathcal{M}^H  \ar[r]^-{i}  & \mathcal{M}_{H^\ast}  \ar[r]^-F & Ch(H) , 
} \] 
where $\mathcal{M}_{H^\ast}$ is the category of $H^\ast$-modules, the forgetful functor $U$ is the right adjoint and the inclusion $i$ is the Quillen functor and the fonctor $F=(-) \otimes Tk$ given by tensoring a Tate resolution of the base field is the left adjoint. 

We send the (co)cyclic $H$-comodule under the functor $F$. By \cite{Kay}, the approximation theorem of cocyclic comodule theory can be regarded as a (local) left Kan extension and the coapproximation as right Kan extension. 
Therefore, Hopf-(co)cyclic homology in $\mathcal{M}^H$ can be calculated via the associated Hopf-(co)cyclic theory in $Ch(H)$.

\bibliographystyle{amsplain} \ifx\undefined\bysame
\newcommand{\bysame}{\leavemode\hbox to3em{\hrulefill}\,} \fi
\begin{bibdiv}
\begin{biblist}

\bib{AC2013}{article}{
   author={Andruskiewitsch, Nicol\'{a}s},
   author={Cuadra, Juan},
   title={On the structure of (co-Frobenius) Hopf algebras},
   journal={J. Noncommut. Geom.},
   volume={7},
   date={2013},
   number={1},
   pages={83--104},
 }

\bib{MR4687384}{article}{
   author={Balodi, Mamta},
   author={Banerjee, Abhishek},
   author={Kour, Surjeet},
   title={Comodule theories in Grothendieck categories and relative Hopf
   objects},
   journal={J. Pure Appl. Algebra},
   volume={228},
   date={2024},
   number={6},
   pages={Paper No. 107607, 32},
}






\bib{Connes1985}{article}{
   author={Connes, Alain},
   title={Noncommutative differential geometry},
   journal={Inst. Hautes \'{E}tudes Sci. Publ. Math.},
   number={62},
   date={1985},
   pages={257--360},
   issn={0073-8301},
   review={\MR{823176}},
} 

\bib{Hopf1}{article}{
   author={Connes, Alain},
   author={Moscovici, H.},
   title={Hopf algebras, cyclic cohomology and the transverse index theorem},
   journal={Comm. Math. Phys.},
   volume={198},
   date={1998},
   number={1},
   pages={199--246},
 }

 \bib{Doi2}{article}{
   author={Doi, Yukio},
   title={On the structure of Hopf modules},
      note={Thesis},
   date={1984},
}

\bib{Doi}{article}{
   author={Doi, Yukio},
   title={Hopf extensions of algebras and Maschke type theorems},
   note={Hopf algebras},
   journal={Israel J. Math.},
   volume={72},
   date={1990},
   number={1-2},
   pages={99--108},
}

\bib{Fari}{article}{
   author={Farinati, Marco A.},
   title={Hopfological algebra for infinite dimensional Hopf algebras},
   journal={Algebr. Represent. Theory},
   volume={24},
   date={2021},
   number={5},
   pages={1325--1357},
 }

  

\bib{HeS1}{article}{
   author={Hess, Kathryn},
   author={Shipley, Brooke},
   title={The homotopy theory of coalgebras over a comonad},
   journal={Proc. Lond. Math. Soc. (3)},
   volume={108},
   date={2014},
   number={2},
   pages={484--516},
 }

\bib{MR3451948}{article}{
   author={Hess, Kathryn},
   author={Shipley, Brooke},
   title={Waldhausen $K$-theory of spaces via comodules},
   journal={Adv. Math.},
   volume={290},
   date={2016},
   pages={1079--1137},
}

\bib{Hov}{article}{
   author={Hovey, Mark},
   title={Model categories},
   series={Mathematical Surveys and Monographs},
   volume={63},
   publisher={American Mathematical Society, Providence, RI}
   date={1999},
   pages={xii+209},
}


\bib{Kay}{article}{
   author={Kaygun, Atabey},
   title={The universal Hopf-cyclic theory},
   journal={J. Noncommut. Geom.},
   volume={2},
   date={2008},
   number={3},
   pages={333--351},
 }

\bib{KK}{article}{
   author={Kaygun, Atabey},
   author={Khalkhali, Masoud},
   title={Bivariant Hopf cyclic cohomology},
   journal={Comm. Algebra},
   volume={38},
   date={2010},
   number={7},
   pages={2513--2537},
  }




 

\bib{Khov}{article}{
   author={Khovanov, Mikhail},
   title={Hopfological algebra and categorification at a root of unity: the
   first steps},
   journal={J. Knot Theory Ramifications},
   volume={25},
   date={2016},
   number={3},
   pages={1640006, 26},
 }

\bib{Li}{article}{
   author={Li, Zhi-Wei},
   title={A note on model structures on arbitrary Frobenius categories},
   journal={Czechoslovak Math. J.},
   volume={67(142)},
   date={2017},
   number={2},
   pages={329--337},
   }

 \bib{Lin}{article}{
   author={Lin, Bertrand I-peng},
   title={Semiperfect coalgebras},
   journal={J. Algebra},
   volume={49},
   date={1977},
   number={2},
   pages={357--373},
}

\bib{LodayCyclicHomology}{book}{
   author={Loday, Jean-Louis},
   title={Cyclic homology},
   series={Grundlehren der Mathematischen Wissenschaften [Fundamental
   Principles of Mathematical Sciences]},
   volume={301},
   edition={2},
   note={Appendix E by Mar\'{i}a O. Ronco;
   Chapter 13 by the author in collaboration with Teimuraz Pirashvili},
   publisher={Springer-Verlag, Berlin},
   date={1998},
   pages={xx+513},
 }

\bib{Mont}{book}{
   author={Montgomery, Susan},
   title={Hopf algebras and their actions on rings},
   series={CBMS Regional Conference Series in Mathematics},
   volume={82},
   publisher={Conference Board of the Mathematical Sciences, Washington, DC;
   by the American Mathematical Society, Providence, RI},
   date={1993},
   pages={xiv+238},
}

\bib{TO}{article}{
    author={Ohara, Mariko},
    author={Tamaki, Dai}
    title={A cotorsion pair in Hopfological algebra},
    journal={preprint},
    volume={},
    date={2020},
    pages={}
 }

\bib{Ohara}{article}{
    author={Ohara, Mariko},
    title={A model structure on the category of $H$-equivariant $A$-modules},
    journal={preprint},
    volume={},
    date={2023},
    pages={}
 }  

\bib{Porst}{article}{
   author={Porst, Hans-E.},
   title={On corings and comodules},
   journal={Arch. Math. (Brno)},
   volume={42},
   date={2006},
   number={4},
   pages={419--425},
}

\bib{Qi}{article}{
   author={Qi, You},
   title={Hopfological algebra},
   journal={Compos. Math.},
   volume={150},
   date={2014},
   number={1},
   pages={1--45},
}

\bib{Sweedler}{book}{
   author={Sweedler, Moss E.},
   title={Hopf algebras},
   series={Mathematics Lecture Note Series},
   publisher={W. A. Benjamin, Inc., New York},
   date={1969},
   pages={vii+336},
}


\end{biblist}		
\end{bibdiv}
\end{document}